\documentclass[11pt,letterpaper]{amsart}
\usepackage[utf8]{inputenc}
\usepackage{amsmath}
\usepackage{amsfonts}
\usepackage{amssymb}
\usepackage{graphicx}
\usepackage[left=2.4cm,right=2.4cm,top=2.5cm,bottom=2.5cm]{geometry}
\numberwithin{equation}{section}
\graphicspath{ {images/} }
\usepackage{amsmath,amssymb, amsthm} 
\usepackage{lipsum}
\usepackage{stmaryrd}
\usepackage{dsfont}

\makeatletter
\def\@settitle{\begin{center}%
  \baselineskip14\p@\relax
  \bfseries
  \uppercasenonmath\@title
  \@title
  \ifx\@subtitle\@empty\else
     \\[1ex]\uppercasenonmath\@subtitle
     \footnotesize\mdseries\@subtitle
  \fi
  \end{center}%
}
\def\subtitle#1{\gdef\@subtitle{#1}}
\def\@subtitle{}
\makeatother

\usepackage{tikz-cd}
\usepackage{hyperref}
\usepackage{bm}
\hypersetup{colorlinks, linkcolor=blue, citecolor=black, urlcolor=black}

\usepackage[english]{babel}
\usepackage[colorinlistoftodos]{todonotes}

\theoremstyle{plain}
\newtheorem{thm}{Theorem}[subsection] 

\usepackage{mathrsfs}

\theoremstyle{definition}
\newtheorem{defi}[thm]{Definition}

\newtheorem{rem}[thm]{Remark}
\theoremstyle{definition}

\theoremstyle{plain}
\newtheorem{prop}[thm]{Proposition}
\theoremstyle{plain}
\newtheorem{lemma}[thm]{Lemma}
\theoremstyle{plain}

\theoremstyle{plain}

\newtheorem{thmintro}{Theorem}

\makeatletter

\makeatother
\newcounter{parentnumber}

\newcommand{\bQ}{\mathbb{Q}}

\makeatletter  
\newcommand{\colim@}[2]{%
  \vtop{\m@th\ialign{##\cr
    \hfil$#1\operator@font colim$\hfil\cr
    \noalign{\nointerlineskip\kern1.5\ex@}#2\cr
    \noalign{\nointerlineskip\kern-\ex@}\cr}}%
}
\newcommand{\colim}{%
  \mathop{\mathpalette\colim@{\rightarrowfill@\scriptscriptstyle}}\nmlimits@
}
\renewcommand{\varprojlim}{%
  \mathop{\mathpalette\varlim@{\leftarrowfill@\scriptscriptstyle}}\nmlimits@
}
\renewcommand{\varinjlim}{%
  \mathop{\mathpalette\varlim@{\rightarrowfill@\scriptscriptstyle}}\nmlimits@
}
\makeatother

\newcommand{\Z}{\mathbb{Z}}
\newcommand{\Q}{\mathbb{Q}}

\newcommand{\X}{\mathcal{X}}

\font\wncyr=wncyr9.8
\newcommand{\sha}{\text{\wncyr{W}}}

\setcounter{tocdepth}{2}

\newcommand{\rH}{\mathrm{H}}

\begin{document}
\title{A formula of Perrin-Riou and characteristic power series of signed Selmer groups}

\author{Francesc Castella}
\address{University of California Santa Barbara, South Hall, Santa Barbara, CA 93106, USA}
\email{castella@ucsb.edu}

\date{\today}

\dedicatory{In memory of Professor John Coates}
\thanks{This research was partially supported by the NSF grants DMS-2101458 and DMS-2401321.}

\begin{abstract}
We prove a conjecture of Kundu--Ray, following from the $p$-adic Birch--Swinnerton-Dyer conjecture for supersingular primes by Bernardi--Perrin-Riou and Kato's Main Conjecture, predicting an expression  for the leading term (up to a $p$-adic unit) of a characteristic power series of Kobayashi's signed Selmer groups attached to elliptic curves $E/\Q$ with supersingular reduction at a prime $p>2$ with $a_p=0$. The proof is deduced from a similar formula due to Perrin-Riou for a generator of her module of arithmetic $p$-adic $L$-functions with values in the Dieudonn\'{e} module of $E$. 
\end{abstract}

\maketitle
\tableofcontents

\addtocontents{toc}{\protect\setcounter{tocdepth}{2}}

\section{Introduction}

Let $E/\Q$ be an elliptic curve and $p$ an odd prime of good reduction for $E$. Let $\X(E/\Q_\infty)$ denote the Pontryagin dual of the Selmer group ${\rm Sel}_{p^\infty}(E/\Q_\infty)$  over the cyclotomic $\Z_p$-extension of $\Q_\infty/\Q$. Let $\Lambda=\Z_p[[{\rm Gal}(\Q_\infty/\Q)]]$ be the cyclotomic Iwasawa algebra, which we identify with the one-variable power series ring $\Z_p[[X]]$ upon the choice of a topological generator $\gamma\in{\rm Gal}(\Q_\infty/\Q)$.

 When $p$ is ordinary for $E$, the Selmer group $\X(E/\Q_\infty)$ is known to be $\Lambda$-torsion by work of Kato \cite{kato-295}, and letting $\xi_p\in\Lambda=\Z_p[[X]]$ denote a characteristic power series for $\mathcal{X}(E/\Q_\infty)$, the work of Schneider \cite{schneider-II} and Perrin-Riou \cite{PR-abvar} (see also \cite{PR-MSMF} for the case where $E$ has complex multiplication) yields an  analogue of the Birch--Swinnerton-Dyer conjecture for $\xi_p$, relating its order of vanishing at $X=0$ to the Mordell--Weil rank of $E$, and expressing its leading coefficient in terms of arithmetic invariants of $E$. 
 
The goal of this note is to prove an analogous result in the case where $p$ is a prime of supersingular reduction for $E$ with $a_p=0$. 
Our main result is in terms of a characteristic power series of Kobayashi's signed Selmer groups; in the rank zero case, 
a result along these lines was first proved by B.-D.~Kim \cite{BDKim-euler-char} by an adaptation of Greenberg's methods \cite{greenberg-cetraro}, so we focus on the case of Mordell--Weil rank $r\geq 1$, where the result we obtain was
conjectured by Kundu--Ray (\emph{cf.} \cite[Conjecture~3.15]{KR-I}), following Sprung's reformulation \cite{sprung-RNT} of the $p$-adic Birch--Swinnerton-Dyer conjecture of Bernardi and Perrin-Riou \cite{BPR} (see also Remark~\ref{rem:pBSD-conj} below). 
 
\subsection{Main result}\label{subsec:main}

From now on, assume that $p>2$ is a supersingular prime for $E$ satisfying $a_p=0$ (a condition that holds automatically unless $p=3$). In \cite{kobayashi}, Kobayashi introduced signed Selmer groups ${\rm Sel}_{p^\infty}^\pm(E/\Q_\infty)$  whose Pontryagin dual 
\[
\X^\pm(E/\Q_\infty)={\rm Hom}_{\Z_p}({\rm Sel}_{p^\infty}^\pm(E/\Q_\infty),\Q_p/\Z_p)
\] 
he showed to be $\Lambda$-torsion 
as a consequence of Kato's work. 

As explained in the work of Bernardi--Perrin-Riou \cite{BPR}, one can naturally attach a quadratic form $h_\nu$ on $E(\Q)$ to every vector $\nu$ in the Dieudonn\'{e} module $D_p(E)=\Q_p\otimes_{\Q}\rH^1_{\rm dR}(E/\Q)$, and we let ${\rm Reg}_\nu\in\Q_p$ be the discriminant of the associated bilinear ($p$-adic height) pairing  
\[
\langle\cdot,\cdot\rangle_\nu:E(\Q)\times E(\Q)\rightarrow\Q_p.
\]
By linearity, these can be extended to $E(\Q)\otimes\Z_p$. Consider the \emph{strict} (or \emph{fine}, in the terminology of e.g. \cite{wuthrich-JAG}) Mordell--Weil group
\[
(E(\Q)\otimes_{}\Z_p)_0:={\rm ker}\bigl\{E(\Q)\otimes_{}\Z_p\overset{}\rightarrow E(\Q_p)\hat\otimes\Z_p\bigr\},
\]
where  $E(\Q_p)\hat\otimes\Z_p$ is the $p$-adic completion of $E(\Q_p)$.

In Section~\ref{sec:coordinates}, similarly as in the work of Sprung \cite{sprung-RNT} we shall introduce certain vectors $N^\pm\in D_p(E)$ in the complement to the Hodge filtration ${\rm Fil}^0D_p(E)=\Q_p\omega_E$, where $\omega_E$ is a N\'{e}ron differential on $E$. Write ${\rm Reg}_p^\pm$ (resp. ${\rm Reg}_p^{\rm str}$) for the above regulator on $E(\Q)$ (resp. $E(\Q)\otimes\Z_p)_0$) associated to 
\[
h_{N^\pm/[\omega_E,N^\pm]_{\rm dR}}=h_{N^\pm}/[\omega_E,N^\pm]_{\rm dR},
\]
where $[\cdot,\cdot]_{\rm dR}$ denotes the de~Rham pairing on $D_p(E)$.

Let $\kappa:{\rm Gal}(\Q_\infty/\Q)\simeq
1+p\Z_p$ be the isomorphism defined by the $p$-adic cyclotomic character. The main result of this note is the following $p$-adic analogue of the Birch--Swinnerton-Dyer conjecture for supersingular primes.

\begin{thmintro}\label{thm:main-intro}
Let $E/\Q$ be an elliptic curve with good supersingular reduction at an odd prime $p$ with $a_p=0$. Put 
\[
r={\rm rank}_\Z E(\Q) 
\]
and suppose $r\geq 1$. Let $\xi_p^\pm\in\Lambda\simeq\Z_p[[X]]$ be a characteristic power 
series for $\X^\pm(E/\Q_\infty)$. 
Then:
\begin{itemize}
\item[(i)] 
$\varrho:=\min\{{\rm ord}_X(\xi^+_p),{\rm ord}_X(\xi^-_p)\}\geq r$.
\item[(ii)]  If $\sha(E/\Q)[p^\infty]$ is finite and ${\rm Reg}_p^{\rm str}\neq 0$, then equality holds in ${\rm (i)}$, and the leading coefficient $(\xi_p^{+,\divideontimes},\xi_p^{-,\divideontimes})$ of the vector $(\xi_p^+,\xi_p^-)\in\Z_p[[X]]^{\oplus{2}}$ is given up to a $p$-adic unit by  
\[
(\xi_p^{+,\divideontimes},\xi_p^{-,\divideontimes})\;\sim_p\;(\log_p\kappa(\gamma))^{-r}\cdot({\rm Reg}_p^+,{\rm Reg}_p^-)\cdot\frac{\#\sha(E/\mathbb{Q})[p^\infty]\cdot{\rm Tam}(E/\mathbb{Q})}{(\#E(\mathbb{Q})_{\rm tors})^2},
\]
where ${\rm log}_p$ is Iwasawa's branch of the $p$-adic logarithm and ${\rm Tam}(E/\Q)=\prod_\ell c_\ell$ is the product of the local Tamagawa numbers of $E$.
\end{itemize}
\end{thmintro}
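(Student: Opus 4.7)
The plan is to deduce the theorem from two main ingredients. The first is the Iwasawa Main Conjecture for $E$ at the supersingular prime $p$, which (following Kato's work \cite{kato-295}, Kobayashi's reformulation via signed Selmer groups \cite{kobayashi}, and the converse divisibility) identifies any characteristic power series $\xi_p^\pm$ of $\X^\pm(E/\Q_\infty)$ with Kobayashi's signed $p$-adic $L$-function $L_p^\pm$, up to a unit in $\Lambda$. The second is a formula of Perrin-Riou, announced in the title of this note, computing the leading term at the trivial character of her arithmetic $p$-adic $L$-function $\mathcal{L}_E^{\rm arith}\in D_p(E)\otimes_{\Q_p}\mathcal{H}$, in terms of a Bernardi--Perrin-Riou $p$-adic height regulator together with the order of $\sha(E/\Q)[p^\infty]$ and the ratio ${\rm Tam}(E/\Q)/(\#E(\Q)_{\rm tors})^2$.

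Concretely, I would decompose
$\mathcal{L}_E^{\rm arith}=\mathcal{L}^+\cdot N^++\mathcal{L}^-\cdot N^-$
along the basis $\{N^+,N^-\}$ of the complement to ${\rm Fil}^0 D_p(E)$. Sprung's matrix-logarithm factorization, which underlies Kobayashi's description of $L_p^\pm$ in terms of Pollack's half-logarithms, relates the pair $(\mathcal{L}^+,\mathcal{L}^-)$ to $(L_p^+,L_p^-)$ by an explicit change of basis whose entries have unit leading coefficient at $X=0$. Applying Perrin-Riou's leading term formula with $\nu=N^\pm$, and invoking the finiteness of $\sha(E/\Q)[p^\infty]$, then yields simultaneously the lower bound ${\rm ord}_X\mathcal{L}^\pm\geq r$ and, up to a $p$-adic unit, the formula
\[
\mathcal{L}^{\pm,\divideontimes}\;\sim_p\;(\log_p\kappa(\gamma))^{-r}\cdot{\rm Reg}_p^\pm\cdot\frac{\#\sha(E/\Q)[p^\infty]\cdot{\rm Tam}(E/\Q)}{(\#E(\Q)_{\rm tors})^2}
\]
for the leading coefficient on each side. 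Transferring this across the Main Conjecture gives both assertions of the theorem: (i) follows from the inequality ${\rm ord}_X\mathcal{L}^\pm\geq r$ for each sign, and (ii) follows from the leading-term formula above, using the non-vanishing of ${\rm Reg}_p^{\rm str}$ to conclude that at least one of ${\rm Reg}_p^\pm$ is non-zero, so that the minimum in the definition of $\varrho$ is attained.

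The main obstacle will be reconciling the two notions of regulator in play. Perrin-Riou's formula naturally produces the regulator ${\rm Reg}_\nu$ of the $\nu$-height on all of $E(\Q)$, whereas the inequality in (i) and the non-vanishing required for equality in (ii) are phrased in terms of the \emph{strict} regulator ${\rm Reg}_p^{\rm str}$ on $(E(\Q)\otimes\Z_p)_0$. Unpacking Perrin-Riou's formula, one sees that the order of vanishing of each $\mathcal{L}^\pm$ is controlled by the rank of the $N^\pm$-height pairing modulo its kernel; the non-degeneracy of the strict pairing on $(E(\Q)\otimes\Z_p)_0$ is precisely the input needed to guarantee that this kernel contributes no extra zeros beyond the expected $r$ on at least one of the two sides. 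A secondary technical point, which I would handle through Kobayashi's integrality results together with the comparison between Perrin-Riou's arithmetic $p$-adic $L$-function and Kato's zeta element, is ensuring that the change-of-basis scalars relating $\mathcal{L}^\pm$ to $L_p^\pm$ are units in $\Z_p$ and not merely non-zero in $\Q_p$, so that the ``$\sim_p$'' in the final formula holds genuinely up to $\Lambda^\times$.
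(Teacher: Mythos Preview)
Your approach has a genuine gap: you invoke the full Iwasawa Main Conjecture (both Kato's divisibility \emph{and} the converse) to identify $(\xi_p^\pm)$ with $(L_p^\pm)$ and then transfer a leading-term formula across that identification. The converse divisibility is not known unconditionally for all elliptic curves with supersingular reduction at $p$, so as stated your argument is conditional. The paper's proof avoids this entirely. It never passes through the analytic $p$-adic $L$-functions $L_p^\pm$ or Kato's reciprocity law; instead it works purely on the algebraic side. For any nonzero $\mathbf{z}\in\mathbb{H}^1(T)$ it defines signed arithmetic power series $\mathcal{F}_p^\pm={\rm Col}^\pm(\mathbf{z}_p)\cdot g_{\rm str}/h_{\mathbf{z}}$, proves the desired leading-term formula for these via Perrin-Riou's result (Theorem~\ref{thm:PR}) together with the coordinate computation of $(1-\varphi)^2{\rm Reg}_p^{\rm PR}$, and then shows by a short Poitou--Tate duality argument (using Kobayashi's result that ${\rm Col}^\pm$ has pseudo-null cokernel) that $\mathcal{F}_p^\pm$ already generates ${\rm char}_\Lambda\mathcal{X}^\pm(E/\Q_\infty)$. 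No Main Conjecture is needed.

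A secondary issue: you propose to decompose $\mathcal{L}_E^{\rm arith}$ in the basis $(N^+,N^-)$, but that is not the basis in which the signed Coleman maps naturally appear. Lei's factorization (Theorem~\ref{thm:Lei-PhD}) expresses $\mathcal{F}_p^{\rm PR}$ as $(\mathcal{F}_p^-,\mathcal{F}_p^+)\,M_{\rm log}\,(\nu_\alpha,\nu_\beta)^t$, so the coefficients that are \emph{a priori} integral and directly tied to ${\rm Col}^\pm$ are the coordinates in $(\nu_-,\nu_+)=Z_{\rm log}(\nu_\alpha,\nu_\beta)^t$. The vectors $N^\pm$ enter only in the computation of the coordinates of $(1-\varphi)^2{\rm Reg}_p^{\rm PR}$ relative to $(\nu_-,\nu_+)$; they are the $\nu$'s for which $\widetilde{\rm Reg}_\nu$ gives those coordinates, not the basis along which the $L$-function itself decomposes. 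Your ``secondary technical point'' about unit scalars is therefore not a matter of integrality of a change of basis between $\mathcal{L}^\pm$ and $L_p^\pm$, but is handled in the paper by the exact sequence \eqref{eq:PT-z} and the pseudo-nullity of ${\rm coker}({\rm Col}^\pm)$.
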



\begin{rem}\label{rem:pBSD-conj}
The conclusion of Theorem~\ref{thm:main-intro} is predicted by  the combination of:
\begin{itemize}
\item The $p$-adic Birch--Swinnerton-Dyer conjecture for supersingular primes $p$ formulated by Bernardi--Perrin-Riou \cite{BPR} (see also \cite[Conj.~2.5]{PR-expmath}), as reformulated by Sprung \cite{sprung-RNT} in terms of signed $p$-adic $L$-functions $L_p^\pm$;
\item Kato's Main Conjecture (see \cite[\S{3.4}]{PR-points}), 
which is known to be equivalent to Kobayashi's Main Conjecture predicting the equality
\[
\bigl(\xi_p^\pm\bigl)\overset{?}=\bigl(L_p^\pm\bigr)
\]
as principal ideals in $\Lambda$ (see \cite[Thm.~7.4]{kobayashi}). 
\end{itemize}
This prediction is recorded in \cite[Conjecture~3.15]{KR-I} (which the added prediction that ${\rm ord}_X(\xi_p^\pm)$ is independent of the choice of sign $\pm$).
\end{rem}

\begin{rem}\label{rem:euler-char}
In \cite{ray-sujatha-CJM}, for any two elliptic curves $E_1,E_2$ over $\Q$ with good supersingular reduction at a prime $p>2$ with $a_p=0$ and $E_1[p]\simeq E_2[p]$ as $G_\Q$-modules, Ray and Sujatha establish relations mod $p$ between the ($\Sigma$-imprimitive) \emph{truncated $\Gamma$-Euler characteristics} of their respective signed Selmer groups ${\rm Sel}_{p^\infty}^\pm(E_i/\Q_\infty)$, defined as 
\[
\chi_t^\pm(\Gamma,E_i):=\frac{\#{\rm ker}(\phi_{E_i})}{\#{\rm coker}(\phi_{E_i})},
\]
where $\phi_{E_i}:{\rm Sel}_{p^\infty}^\pm(E_i/\Q_\infty)^\Gamma\rightarrow{\rm Sel}_{p^\infty}^\pm(E_i/\Q_\infty)_\Gamma$, sending $s\mapsto s\,{\rm mod}\,(\gamma-1)$, is the natural map  from the $\Gamma$-invariants to the $\Gamma$-coinvariants.
Using the structure theorem for finitely generated $\Lambda$-modules, one easily checks that $\chi_t^\pm(\Gamma,E_i)$ is defined,  in the sense that both $\ker(\phi_{E_i})$ and ${\rm coker}(\phi_{E_i})$ are finite, whenever $\gamma$ acts semi-simply on ${\rm Sel}_{p^\infty}^\pm(E_i/\Q_\infty)$ (a condition expected to always hold in the cyclotomic setting; see \cite[Lem.~6.1]{APAW} for a more general result), in which case one has
\[
\chi_t^\pm(\Gamma,E_i)=\vert\xi_p^{\pm,\divideontimes}(E_i)\vert_p^{-1}
\] 
(see e.g. \cite[Lem.~2.11]{zerbes-PLMS}), where $\xi_p^{\pm,\divideontimes}(E_i)$ is the leading coefficient of a characteristic power series 
for $\mathcal{X}^\pm(E_i/\Q_\infty)$, and $\vert\cdot\vert_p$ denotes the $p$-adic absolute value on $\Q_p$ with $\vert p\vert_p=1/p$. Thus in particular \cite[Thm.~5.5]{ray-sujatha-CJM} (for at least one of the signs $\pm$)  becomes a congruence relation mod $p$ between the arithmetic invariants appearing in Theorem~\ref{thm:main-intro}.  

In a similar vein, as a consequence of Theorem~\ref{thm:main-intro}, \cite[Thm.~3.16]{KR-I} (for at least one of the signs $\pm$) now holds unconditionally. 
\end{rem}



\subsection{Outline of the proof}

In \cite{PR-points}, Perrin-Riou proved a $p$-adic Birch--Swinnerton-Dyer formula for a certain arithmetic $p$-adic $L$-function
\[
\mathcal{F}_p^{\rm PR}\in D_p(E)\otimes_{\Q_p}\mathcal{H},
\]
where 
$\mathcal{H}\subset\Q_p[[X]]$ is the ring of power series convergent in the $p$-adic open unit disk. A term in her leading coefficient formula is a $p$-adic regulator
\begin{equation}\label{eq:intro-reg}
(1-\varphi)^2{\rm Reg}_p^{\rm PR}\in D_p(E)
\end{equation}
where $\varphi$ is the Frobenius operator. Building on a result of Lei \cite{lei-PhD} expressing Kobayashi's signed Coleman maps in terms of Perrin-Riou's work \cite{PR-94}, 
we extract from $\mathcal{F}_p^{\rm PR}$ two signed power series $\mathcal{F}_p^\pm\in\Z_p[[X]]$. 
By direct computation of the coordinates of \eqref{eq:intro-reg} relative to a certain basis $(\nu_-,\nu_+)$ of $D_p(E)$ on the one hand, and of the same coordinates of the leading coefficient $\mathcal{F}_p^{{\rm PR},\divideontimes}\in D_p(E)$ of $\mathcal{F}_p^{\rm PR}$ on the other hand,   
from Perrin-Riou's formula we arrive at expressions for the order of vanishing and the leading coefficient of $\mathcal{F}_p^\pm$ agreeing with those in Theorem~\ref{thm:main-intro} for the characteristic power series $\xi_p^\pm$. 
%
We note here that similar computations (that in fact served as the original motivation for this note) were performed by Sprung \cite{sprung-RNT} in his study of $p$-adic analogues of the Birch--Swinnerton-Dyer conjecture for the signed (or rather, $\sharp/\flat$-) $p$-adic $L$-functions constructed in \cite{sprung-JNT}.  
Finally, from an application of global duality we show that $\mathcal{F}_p^\pm$ generates the characteristic ideal of $\X^\pm(E/\Q_\infty)$. 

\subsection{Acknowledgements}  We would like to dedicate this note to the memory of Prof.~John Coates. It is a pleasure to thank Professors Ye Tian, Yichao Tian, and Xin Wan for the opportunity to make this small contribution to a special issue honoring such a great mathematician. We also thank Anwesh Ray and Debanjana Kundu for their comments on the topic of this note, and Antonio Lei for bringing \cite{ray-sujatha-CJM} to our attention.

\section{A formula of Perrin-Riou}

In this section we recall a $p$-adic Birch--Swinnerton-Dyer  formula for arithmetic $p$-adic $L$-functions established in \cite{PR-points}.

\subsection{Dieudonn\'{e} modules}

Let $E/\bQ$ be an elliptic curve, and $p$ an odd prime of good reduction for $E$. Let
\[
D_p(E):=\Q_p\otimes_{\Q}\rH^1_{\rm dR}(E/\Q)
\]
denote the Dieudonn\'{e} module of $E$. This is a $2$-dimensional $\Q_p$-vector space equipped with a Frobenius operator $\varphi$, a Hodge filtration $D_p(E)\supset{\rm Fil}^0D_p(E)\supset 0$, with ${\rm Fil}^0D_p(E)$ spanned by the class of a N\'{e}ron differential $\omega_E\in\Omega_{E/\Z}$, and a non-degenerate alternating pairing
\[
[\cdot,\cdot]_{\rm dR}:D_p(E)\times D_p(E)\rightarrow\Q_p.
\]
The operator $\varphi$ has characteristic polynomial $x^2-\frac{a_p}{p}x+\frac{1}{p}$, where $a_p:=p+1-\#E(\mathbb{F}_p)$.

\subsection{Arithmetic $p$-adic $L$-function}

Let $T$ be the $p$-adic Tate module of $E$, and put $V=\Q_p\otimes_{\Z_p}T$. As in the Introduction, let $\Gamma$ be the Galois group of the cyclotomic $\Z_p$-extension $\Q_\infty/\Q$, which we shall often identify with the Galois group of the cyclotomic $\Z_p$-extension $\Q_{p,\infty}/\Q_p$, and let $\Lambda=\Z_p[[\Gamma]]$ be the cyclotomic Iwasawa algebra, often identified with the $1$-variable power series ring $\Z_p[[X]]$ via $\gamma=1+X$ upon the choice of a fixed topological generator $\gamma\in\Gamma$. For each $n\geq 0$, let $\Q_n$ (resp. $\Q_{p,n}$) be the unique subextension of $\Q_\infty$ (resp. $\Q_{p,\infty}$) of degree $p^n$ over $\Q$ (resp. $\Q_p$).

For $h\geq 0$, let 
\[
\mathcal{H}_h=\Biggl\{\sum_{n\geq 0}c_nX^n\in\Q_p[[X]]\,\Bigl\vert\,\lim_{n\to\infty}\frac{\vert c_n\vert_p}{n^h}=0\Biggr\},
\]
where $\vert\cdot\vert_p$ denotes the $p$-adic absolute value on $\Q_p$ with the standard normalization $\vert p\vert_p=1/p$, and put $\mathcal{H}=\bigcup_{h\geq 0}\mathcal{H}_h$ and $\mathcal{H}(\Gamma)=\{f(\gamma-1)\,\vert\,f\in\mathcal{H}\}$. Write
\[
\rH^1_{\rm Iw}(\Q_{p,\infty},T):=\varprojlim_n\rH^1(\Q_{p,n},T)
\]
for the Iwasawa cohomology of $T$, and put $\rH^1_{\rm Iw}(\Q_{p,\infty},V)=\Q_p\otimes_{\Z_p}\rH^1_{\rm Iw}(\bQ_{p,\infty},T)$.

We begin by recalling Perrin-Riou's big exponential map, which we state below in a rather rough form (see e.g. \cite[\S{1}]{PR-points} for a more precise statement). The Weil pairing gives a natural identification $V\simeq V^*(1):={\rm Hom}_{\Q_p}(V,\Q_p(1))$ (so in particular, $\mathbf{D}_{\rm dR}(V^*(1)):=(V^*(1)\otimes_{\Q_p}\mathbf{B}_{\rm dR})^{G_{\Q_p}}\simeq D_p(E)$ by the comparison isomorphism), but in the following we shall nonetheless  keep the distinction between the two. 

\begin{thm}\label{thm:PR-map}
There exists an injective $\Lambda$-module homomorphism
\[
\Omega_{V^*(1)}:
\Lambda\otimes_{\Z_p}\mathbf{D}_{\rm dR}(V^*(1))\rightarrow\rH^1_{\rm Iw}(\Q_{p,\infty},V^*(1))\otimes_{\Q_p}\mathcal{H}(\Gamma)
\]
interpolating ${\rm exp}_{\bQ_{p,n},V^*(1)}$ for all $n\geq 0$.
\end{thm}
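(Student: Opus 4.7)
The plan is to construct $\Omega_{V^*(1)}$ via the theory of $(\varphi,\Gamma)$-modules, and then verify the interpolation property using Perrin-Riou's explicit reciprocity law; the construction itself is formal once the right framework is set up, while the interpolation is the deep part. First I would invoke the Fontaine--Herr description of Iwasawa cohomology,
\[
\rH^1_{\rm Iw}(\Q_{p,\infty},V^*(1))\;\simeq\;\mathbf{D}(V^*(1))^{\psi=1},
\]
which reduces the problem to producing a map into $\mathbf{D}(V^*(1))^{\psi=1}\otimes_{\Q_p}\mathcal{H}(\Gamma)$. Since $V$ arises from an elliptic curve with good reduction at $p$, the representation $V^*(1)$ is crystalline, and the associated Wach module $\mathbf{N}(V^*(1))\subset\mathbf{D}(V^*(1))$ provides a natural lattice linking $\mathbf{D}_{\rm cris}(V^*(1))\simeq\mathbf{D}_{\rm dR}(V^*(1))$ to the ambient $(\varphi,\Gamma)$-module.

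Given $x\in\mathbf{D}_{\rm dR}(V^*(1))$, I would define $\Omega_{V^*(1)}(x)$ by solving an equation of the shape $(1-\varphi)F_x=\tilde{x}$ inside $\mathbf{N}(V^*(1))\otimes\mathcal{H}(\Gamma)$ for a canonical lift $\tilde{x}$ of $x$. The operator $1-\varphi$ becomes invertible on the relevant quotient once scalars are extended from $\Lambda$ to $\mathcal{H}(\Gamma)$ (using that $\log\chi(\gamma)$ is a unit in the appropriate completion), so $F_x$ exists and can be arranged to lie in the $\psi=1$ part, yielding a class in $\rH^1_{\rm Iw}(\Q_{p,\infty},V^*(1))\otimes\mathcal{H}(\Gamma)$. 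Extending $\Lambda$-linearly in the first variable and checking compatibility with the $\Gamma$-action are routine.

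The main obstacle is the interpolation property: specializing $\Omega_{V^*(1)}(x)$ at a finite-order character of $\Gamma$, or at the trivial character on $\Gamma_n=\Gal(\Q_{p,\infty}/\Q_{p,n})$, should recover $\exp_{\Q_{p,n},V^*(1)}$ applied to $x$, up to local correction factors of the form $(1-p^{-1}\varphi^{-1})/(1-\varphi)$. This is the content of Perrin-Riou's explicit reciprocity law, and requires a careful matching between the $(\varphi,\Gamma)$-module description of $\rH^1$ and the Bloch--Kato exponential defined via the fundamental exact sequence of $p$-adic Hodge theory, proceeding by explicit computation in Fontaine's period rings $\mathbf{B}_{\rm cris}$ and $\mathbf{B}_{\rm dR}$. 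Once interpolation is established, injectivity is a density argument: a kernel element would force the Bloch--Kato exponential to vanish at infinitely many twists of $V^*(1)$, and since elements of $\mathcal{H}(\Gamma)$ are determined by their values at finite-order characters (via the Amice transform), the element must vanish.
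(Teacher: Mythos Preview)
The paper does not actually prove this theorem: its entire ``proof'' is a one-line citation to Perrin-Riou's original construction in \cite[\S{3.2.3}]{PR-94} (specialized to $h=1$, $j=0$) together with \cite[Thm.~1.3]{PR-points}. In other words, the statement is treated as a foundational black box imported from the literature, not as something to be established within the paper.

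Your proposal, by contrast, sketches an honest construction of $\Omega_{V^*(1)}$. The route you outline is not Perrin-Riou's original one but rather the later reformulation via $(\varphi,\Gamma)$-modules and Wach modules due to Berger: identify $\rH^1_{\rm Iw}$ with $\mathbf{D}(V^*(1))^{\psi=1}$, lift elements of $\mathbf{D}_{\rm cris}$ into the Wach module, and invert $1-\varphi$ after extending scalars to $\mathcal{H}(\Gamma)$. This is a perfectly valid and nowadays standard approach, and for a crystalline representation coming from an elliptic curve it recovers exactly Perrin-Riou's map. Perrin-Riou's own 1994 argument instead went through Coleman power series and generalized Coates--Wiles homomorphisms; the Wach-module language came later. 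So your sketch is correct in outline but is a genuinely different (and more modern) packaging of the same object than the reference the paper cites.

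One small point of terminology: the interpolation of $\exp_{\Q_{p,n},V^*(1)}$ (up to the Euler-type factor $(1-p^{-1}\varphi^{-1})(1-\varphi)^{-1}$) is built into the construction and is what the theorem asserts; what is usually called \emph{Perrin-Riou's explicit reciprocity law} (the $\delta(V)$ or ${\rm Rec}(V)$ conjecture, proved by Colmez and others) is the separate, deeper statement relating $\Omega_V$ and $\Omega_{V^*(1)}$ via the dual exponential. Your sketch conflates these slightly, but it does not affect the validity of the outline for the statement at hand.
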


\begin{proof}
This follows by taking $h=1$ and $j=0$ in \cite[\S{3.2.3}]{PR-94} (see also \cite[Thm.~1.3]{PR-points}.
\end{proof}

For any $\eta\in\mathbf{D}_{\rm dR}(V^*(1))$, the map $\Omega_{V^*(1)}$ may be evaluated at $\eta\otimes(1+X)$.  Given $\mathbf{z}\in\rH^1_{\rm Iw}(\Q_{p,\infty},V)$, we thus define
\begin{align*}
\mathscr{L}_{\mathbf{z}}:\mathbf{D}_{\rm dR}(V^*(1))&\rightarrow\mathcal{H}(\Gamma),\quad
\eta\mapsto\left\langle\Omega_{V^*(1)}(\eta\otimes(1+X)),\mathbf{z}\right\rangle_{\Q_{p,\infty}},
\end{align*}
where $\langle\cdot,\cdot\rangle_{\Q_{p,\infty}}:\rH^1_{\rm Iw}(\Q_{p,\infty},V^*(1))\times\rH^1_{\rm Iw}(\Q_{p,\infty},V)\rightarrow\Lambda$ is Perrin-Riou's $\Lambda$-adic  Tate pairing (see e.g. \cite[\S{2.1.2}]{PR-points}), given by
\[
\langle\mathbf{x},\mathbf{y}\rangle_{\Q_{p,\infty}}:=\Biggl(\sum_{\sigma\in\Gamma_n}\langle x_n^{\sigma^{-1}},y_n\rangle_{\Q_{p,n}}\cdot\sigma\Biggr)_n
\]
for $\mathbf{x}=(x_n)_n$, $\mathbf{y}=(y_n)_n$ and $\Gamma_n={\rm Gal}(\bQ_{p,n}/\Q_p)$.

In the following, we shall view $\mathscr{L}_{\mathbf{z}}$ as an element
\[
\mathscr{L}_{\mathbf{z}}\in{D}_p(E)\otimes_{\Q_p}\mathcal{H}(\Gamma)
\]
using the canonical isomorphism ${\rm Hom}_{\Q_p}(\mathbf{D}_{\rm dR}(V^*(1)),\mathcal{H}(\Gamma))\simeq\mathbf{D}_{\rm dR}(V)\otimes_{\Q_p}\mathcal{H}(\Gamma)$ induced by $[\cdot,\cdot]_{\rm dR}$ and the identification $\mathbf{D}_{\rm dR}(V)\simeq D_p(E)$ arising from the comparison isomorphsm.

Let ${\rm Sel}_{p^\infty}^{\rm str}(E/\Q_n)$ be the \emph{strict Selmer group} defined by
\[
{\rm Sel}^{\rm str}_{p^\infty}(E/\Q_n):={\rm ker}\left\{{\rm Sel}_{p^\infty}(E/\Q_n)\xrightarrow{{\rm res}_p}E(\Q_{p,n})\otimes\Q_p/\Z_p\right\},
\]
and put ${\rm Sel}_{p^\infty}^{\rm str}(E/\Q_\infty)=\varinjlim_n{\rm Sel}_{p^\infty}^{\rm str}(E/\Q_n)$. For any finite set of primes $S$ containing $p$ and $\infty$, let $\Q^S$ denote the maximal extension of $\Q$ unramified outside $S$, and put
\[
\mathbb{H}^1(T):=\varprojlim_n\rH^1({\rm Gal}(\Q^S/\Q_n),T).
\]
(This is easily checked to be independent of $S$; see e.g \cite[p.\,983]{PR-points}.) 

By Kato's work \cite{kato-295}, ${\rm Sel}_{p^\infty}^{\rm str}(E/\Q_\infty)$ is $\Lambda$-cotorsion and $\mathbb{H}^1(T)$ is torsion-free of $\Lambda$-rank 1.

\begin{defi}\label{def:arith-Lp}
Let $\mathbf{z}\in\mathbb{H}^1(T)$ be a nonzero element, and put
\[
\mathcal{F}_p^{\rm PR}:=\mathscr{L}_{\mathbf{z}_p}\cdot\frac{g_{\rm str}}{h_{\mathbf{z}}}\in D_p(E)[[X]],
\]
where $\mathbf{z}_p={\rm res}_p(\mathbf{z})$ denotes the image of $\mathbf{z}$ under the restriction map $\mathbb{H}^1(T)\rightarrow\rH^1_{\rm Iw}(\Q_{p,\infty},T)$ and $g_{\rm str}$ (resp. $h_{\mathbf{z}}$) is a characteristic power series for ${\rm Sel}_{p^\infty}^{\rm str}(E/\Q_\infty)^\vee$ (resp. $\mathbb{H}^1(T)/(\mathbf{z})$).
\end{defi}

We note that $\mathcal{F}_p^{\rm PR}$ gives a generator of the $\Lambda$-module of \emph{arithmetic $p$-adic $L$-functions} as introduced in Perrin-Riou's work (see e.g. \cite[\S{3.4.3}]{PR-points} and \cite[\S{3.1}]{PR-expmath}).

\subsection{$p$-adic regulators}\label{subsec:regulators}

Let 
\[
y^2-a_1xy+a_3y=x^3+a_2x^2+a_4x+a_6
\] 
be a minimal Weierstrass model for $E$. Take $\omega_E=\frac{dx}{2y+a_1x+a_3}$ and put $\eta=x\omega_E$; then the pair $(\omega_E,\eta)$ forms a basis for $D_p(E)$.

For each $\nu\in D_p(E)$, we let $h_\nu$ be the quadratic form on $E(\Q)$ defined as in \cite{BPR}. In particular, $h_\nu(P)=-\log_{\omega_E}(P)^2$, where $\log_{\omega_E}$ is the logarithm on $E(\Q)$ associated to $\omega_E$, $h_\eta$ is Bernardi's $p$-adic height using $p$-adic $\sigma$-functions \cite{bernardi}, and $h_{\nu}$ for an arbitrary $\nu=a\omega_E+b\eta\in D_p(E)$ is defined by linearity as $ah_{\omega_E}+bh_\eta$.

\begin{defi}\label{def:reg-nu}
Let $r={\rm rank}_{\Z}E(\Q)$, and let ${\rm Reg}_\nu$ denote the discriminant of the quadratic form $\langle P,Q\rangle_\nu:=h_\nu(P+Q)-h_\nu(P)-h_\nu(Q)$ on $E(\Q)$, i.e.
\begin{equation}\label{eq:reg}
{\rm Reg}_\nu=\frac{{\rm det}(\langle P_i,P_j\rangle_\nu)}{[E(\Q):\sum_{i=1}^r\Z P_i]^2},
\end{equation}
where $P_1,\dots,P_r$ is any system of $r$ points in $E(\Q)$ giving a basis of $E(\Q)\otimes_\Z\Q$.
\end{defi}

\begin{lemma}\label{lem:reg-p}
Suppose $r={\rm rank}_\Z E(\Q)\geq 1$. Then there exists a unique ${\rm Reg}_p^{\rm PR}\in D_p(E)$ such that
\[
\left[{\rm Reg}_p^{\rm PR},\nu\right]_{\rm dR}=\widetilde{\rm Reg}_\nu,\quad\textrm{where}\quad\widetilde{\rm Reg}_{\nu}:=\frac{{\rm Reg}_\nu}{[\omega_E,\nu]_{\rm dR}^{r-1}}
\]
for all 
$\nu\not\in{\rm Fil}^0D_p(E)$. 
\end{lemma}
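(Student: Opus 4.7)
The plan is to exploit the non-degeneracy of the de Rham pairing on $D_p(E)$: since $[\cdot,\cdot]_{\rm dR}$ identifies $D_p(E)$ with its own $\Q_p$-linear dual, it suffices to prove that $\nu\mapsto\widetilde{\rm Reg}_\nu$, defined a priori only on the complement of ${\rm Fil}^0D_p(E)$, is the restriction of a $\Q_p$-linear functional on $D_p(E)$. Uniqueness of ${\rm Reg}_p^{\rm PR}$ is then automatic from non-degeneracy (since the complement of ${\rm Fil}^0D_p(E)$ spans $D_p(E)$), and existence amounts to producing the element representing this functional.

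In the basis $(\omega_E,\eta)$, I would write $\nu=a\omega_E+b\eta$, so that $[\omega_E,\nu]_{\rm dR}=bc$ with $c:=[\omega_E,\eta]_{\rm dR}\neq 0$, and the condition $\nu\notin{\rm Fil}^0D_p(E)$ translates to $b\neq 0$. Fix a system $P_1,\dots,P_r\in E(\Q)$ giving a basis of $E(\Q)\otimes\Q$, and set $M_\nu:=(\langle P_i,P_j\rangle_\nu)_{1\leq i,j\leq r}$. Because $h_\nu=ah_{\omega_E}+bh_\eta$ by definition, the matrix $M_\nu=aM_{\omega_E}+bM_\eta$ depends $\Q_p$-linearly on $\nu$. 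The key observation is that $h_{\omega_E}(P)=-\log_{\omega_E}(P)^2$ polarizes to
\[
\langle P,Q\rangle_{\omega_E}=-2\log_{\omega_E}(P)\log_{\omega_E}(Q),
\]
so $M_{\omega_E}=-2\ell\ell^T$, where $\ell:=(\log_{\omega_E}(P_i))_{i=1}^r$, has rank at most one.

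Applying the matrix-determinant lemma to the rank-one perturbation $M_\nu=bM_\eta-2a\ell\ell^T$ gives
\[
\det(M_\nu)=b^r\det(M_\eta)-2ab^{r-1}\,\ell^T\operatorname{adj}(M_\eta)\,\ell,
\]
which is divisible by $b^{r-1}$ with quotient $\Q_p$-linear in $(a,b)$. Dividing by $[E(\Q):\sum_i\Z P_i]^2$ and by $[\omega_E,\nu]_{\rm dR}^{r-1}=b^{r-1}c^{r-1}$ yields an explicit expression
\[
\widetilde{\rm Reg}_\nu=\frac{b\det(M_\eta)-2a\,\ell^T\operatorname{adj}(M_\eta)\,\ell}{[E(\Q):\sum_i\Z P_i]^2\cdot c^{r-1}}
\]
that is manifestly $\Q_p$-linear in $(a,b)$ on $\{b\neq 0\}$, and so extends uniquely to a linear functional on all of $D_p(E)$. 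Non-degeneracy of $[\cdot,\cdot]_{\rm dR}$ then produces the desired ${\rm Reg}_p^{\rm PR}$.

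The crux of the argument is recognizing the rank-one structure of $M_{\omega_E}$: once it is isolated, the matrix-determinant identity delivers exactly the denominator $[\omega_E,\nu]_{\rm dR}^{r-1}$ built into the definition of $\widetilde{\rm Reg}_\nu$, and the remainder is formal. I do not anticipate any genuine obstacle; one need only verify independence of the choice of basis $P_1,\dots,P_r$ (inherited from ${\rm Reg}_\nu$ itself), and note that the edge case $r=1$ is handled uniformly by interpreting $\operatorname{adj}$ of a $1\times 1$ matrix as $1$.
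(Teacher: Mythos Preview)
Your argument is correct. The paper does not give a self-contained proof of this lemma at all: it simply cites \cite[Lem.~2.6]{PR-expmath} (with the correction noted in \cite[Lem.~4.2]{SW}) and moves on. So your route is genuinely different in that you supply the missing content rather than deferring to the literature.

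What you do that the paper does not is make explicit \emph{why} the normalization by $[\omega_E,\nu]_{\rm dR}^{r-1}$ is exactly what is needed to linearize $\nu\mapsto{\rm Reg}_\nu$. Your identification of $M_{\omega_E}=-2\ell\ell^T$ as a rank-one matrix, together with the matrix-determinant lemma applied to $bM_\eta-2a\ell\ell^T$, gives a transparent reason: the determinant of an $r\times r$ rank-one perturbation of $bM_\eta$ is $b^{r-1}$ times something linear in $(a,b)$. This is more informative than a bare citation, and in particular it makes the later coordinate computations in the paper (Lemma~\ref{lem:PR-ab} and Proposition~\ref{prop:coordinates}) feel less mysterious, since the linearity of $\nu\mapsto\widetilde{\rm Reg}_\nu$ is used there without further comment. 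The trade-off is only length: the citation is one line, your argument is a paragraph. Your remarks on independence of the basis $P_1,\dots,P_r$ and on the $r=1$ case (where $\operatorname{adj}$ of a $1\times 1$ matrix is $1$, so $\det(M_\nu)=b\det(M_\eta)-2a\ell_1^2$) are accurate and complete the argument.
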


\begin{proof}
This is shown in \cite[Lem.~2.6]{PR-expmath} (whose  statement is missing the factor $[\omega_E,\nu]^{r-1}$ as noted in \cite[Lem.~4.2]{SW}).
\end{proof}


As in the Introduction, let $(E(\Q)\otimes\Z_p)_0\subset E(\Q)\otimes\Z_p$
be the strict Mordell--Weil group. 

\begin{defi}\label{def:reg-str}
Write ${\rm Reg}_p^{\rm str}$ for the discriminant of the bilinear ($p$-adic height) pairing associated to the restriction to $(E(\Q)\otimes\Z_p)_0$ of the normalized quadratic form
\begin{equation}\label{eq:norm-ht}
h_{\nu/[\omega_E,\nu]_{\rm dR}}=h_\nu/[\omega_E,\nu]_{\rm dR}\nonumber
\end{equation}
for any $\nu\not\in{\rm Fil}^0D_p(E)$ (this is independent of $\nu$).
\end{defi}



\subsection{Perrin-Riou's formula}

The following key result is a $p$-adic analogue of the Birch--Swinnerton-Dyer conjecture for the arithmetic $p$-adic $L$-function $\mathcal{F}_p^{\rm PR}$.

\begin{thm}\label{thm:PR}
Let $E/\Q$ be an elliptic curve with good supersingular reduction at an odd prime $p$, and put $r={\rm rank}_{\Z}E(\Q)$. Then: 
\begin{itemize}
\item[(i)] $\mathcal{F}_p^{\rm PR}$ vanishes to order at least $r$ at $X=0$. 
\item[(ii)] If $\sha(E/\Q)[p^\infty]$ is finite and ${\rm Reg}_p^{\rm str}\neq 0$ then equality holds in ${\rm (i)}$, and writing 
\[
\mathcal{F}_p^{{\rm PR},(r)}:=X^{-r}\mathcal{F}_p^{\rm PR}\in D_p(E)[[X]]
\]
we have that $\mathcal{F}_p^{{\rm PR},\divideontimes}:=\mathcal{F}_p^{{\rm PR},(r)}(0)\in D_p(E)$ satisfies the equality up to a $p$-adic unit
\[
\mathcal{F}_p^{{\rm PR},\divideontimes}\;\sim_p\;(\log_p\kappa(\gamma))^{-r}\cdot(1-\varphi)^2{\rm Reg}_p^{\rm PR}\cdot\frac{\#\sha(E/\mathbb{Q})[p^\infty]\cdot{\rm Tam}(E/\mathbb{Q})}{(\#E(\mathbb{Q})_{\rm tors})^2}.
\] 
\end{itemize}
\end{thm}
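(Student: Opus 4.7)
The plan is to reconstruct Perrin-Riou's argument from \cite{PR-points} using the framework of Theorem \ref{thm:PR-map}. The overall strategy is to specialize $\mathcal{F}_p^{\rm PR}$ at the trivial character $X = 0$, identify the ratio $g_{\rm str}/h_{\mathbf{z}}$ with the correct cohomological correction via Poitou--Tate duality, and extract the leading coefficient by iterated ``differentiation'' in the cyclotomic direction.

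For part (i), the first step is to analyze $\mathscr{L}_{\mathbf{z}_p}$ at $X=0$. Using the interpolation property of $\Omega_{V^*(1)}$, the pairing $\langle\Omega_{V^*(1)}(\eta\otimes(1+X)),\mathbf{z}_p\rangle_{\Q_{p,\infty}}$ specializes at $X=0$, up to an Euler factor involving $(1-\varphi)^2$, to the Bloch--Kato dual exponential $\exp^*({\rm res}_p\,\mathbf{z}_0)(\eta)$, where $\mathbf{z}_0$ is the image of $\mathbf{z}$ in $\rH^1(\Q,T)$. A descent argument applied to Kato's Euler system $\mathbf{z}$ along $\Q_\infty/\Q$ then shows that the presence of $r$ independent rational points in $E(\Q)$ forces $\mathbf{z}_p$ to be divisible by $(\gamma-1)^r$ modulo torsion in $\mathbb{H}^1(T)$, giving the order of vanishing bound.

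For part (ii), the leading coefficient is computed by iterated ``differentiation'' of $\mathscr{L}_{\mathbf{z}_p}$ at $X=0$. The $r$-th derivative, evaluated against any $\nu\in D_p(E)\setminus{\rm Fil}^0$, produces the $r\times r$ regulator matrix of the $p$-adic height pairing $\langle\cdot,\cdot\rangle_\nu$ on $E(\Q)$, weighted by a factor of $(\log_p\kappa(\gamma))^{r}$ arising from the parametrization $\gamma\leftrightarrow 1+X$. By the defining property of ${\rm Reg}_p^{\rm PR}$ in Lemma \ref{lem:reg-p}, the coordinate-free version reads $[(1-\varphi)^2{\rm Reg}_p^{\rm PR},\nu]_{\rm dR}$ after accounting for the Euler factor from step one. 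The arithmetic factors $\#\sha[p^\infty]\cdot{\rm Tam}(E/\Q)/(\#E(\Q)_{\rm tors})^2$ then enter via the value of $g_{\rm str}/h_{\mathbf{z}}$ at $X=0$, which I would compute by a Greenberg-style control theorem once $\sha[p^\infty]$ is finite and ${\rm Reg}_p^{\rm str}\ne 0$ -- the latter guaranteeing nonvanishing of $h_{\mathbf{z}}(0)$.

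The main obstacle I anticipate is the precise matching of normalizations. Getting the local Euler factor to come out exactly $(1-\varphi)^2$ uses the supersingular hypothesis $a_p=0$, which makes the characteristic polynomial $x^2+p^{-1}$ of $\varphi$ symmetric and causes two Euler-type contributions to collapse; any slack here shows up as an unwanted $\Q_p^\times$-factor. A secondary difficulty is verifying that the value of $g_{\rm str}/h_{\mathbf{z}}$ at $X=0$ is a $p$-adic unit times the stated arithmetic ratio: this requires matching the leading coefficients of $g_{\rm str}$ and $h_{\mathbf{z}}$ separately against the structure of the strict Selmer group, which in turn reduces to known cases of Bloch--Kato-style formulae thanks to Kato's work \cite{kato-295}.
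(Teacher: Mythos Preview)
The paper does not prove this theorem at all: it is quoted as a result of Perrin-Riou and disposed of in one line by citing Propositions~3.4.5 and 3.4.6 of \cite{PR-points} (and \cite[Thm.~3.1]{PR-expmath}). So there is no ``paper's own proof'' to compare against beyond the citation; your proposal is an attempt to unpack what Perrin-Riou actually does, and in broad outline it is on the right track---the interpolation property of $\Omega_{V^*(1)}$ at the trivial character produces the dual exponential with the Euler factor $(1-\varphi)^2$, the order of vanishing is governed by the Mordell--Weil rank via a descent/control argument, and the arithmetic constants enter through the leading terms of $g_{\rm str}$ and $h_{\mathbf{z}}$.

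Two corrections, however. First, your claim that the hypothesis $a_p=0$ is what makes the local Euler factor come out as $(1-\varphi)^2$ is wrong: Theorem~\ref{thm:PR} as stated requires only good supersingular reduction, and the operator $(1-\varphi)^2$ arises from Perrin-Riou's interpolation formula for $\Omega_{V^*(1)}$ in complete generality (it is the action of $(1-\varphi)(1-p^{-1}\varphi^{-1})$ on $\mathbf{D}_{\rm dR}$, which for weight-$(0,1)$ crystalline representations reduces to the stated form). The assumption $a_p=0$ is used elsewhere in the paper (for Kobayashi's signed Coleman maps), not here. Second, the element $\mathbf{z}\in\mathbb{H}^1(T)$ in Definition~\ref{def:arith-Lp} is an \emph{arbitrary} nonzero class, not Kato's Euler system; the whole point of the correction factor $g_{\rm str}/h_{\mathbf{z}}$ is to make $\mathcal{F}_p^{\rm PR}$ independent (up to unit) of this choice. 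Your sketch of part~(i), which leans on divisibility properties specific to Kato's class, therefore misidentifies where the vanishing comes from: the order of vanishing of $\mathcal{F}_p^{\rm PR}$ is assembled from the separate orders of $\mathscr{L}_{\mathbf{z}_p}$, $g_{\rm str}$, and $h_{\mathbf{z}}$, and the combinatorics of how these add up to $r$ (via the ranks of the relevant Selmer and Iwasawa modules) is the actual content of Perrin-Riou's descent calculation.
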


\begin{proof}
This is shown in Propositions~3.4.5 and 3.4.6 in \cite{PR-points} (see also \cite[Thm.~3.1]{PR-expmath}).
\end{proof}

\begin{rem}
A result similar to Theorem~\ref{thm:PR} is obtained in \cite{PR-book} for much  more general $p$-adic representations $V$.
\end{rem}

\section{Perrin-Riou's big exponential and signed Coleman maps}

By Kobayashi's definition in \cite{kobayashi}, the local conditions at $p$ defining the signed Selmer groups ${\rm Sel}_{p^\infty}^\pm(E/\Q_n)$ are given by $E^\pm(\Q_{p,n})\otimes\bQ_p/\Z_p={\rm ker}({\rm Col}_n^\pm)^\perp$, where the superscript $\perp$ denotes the orthogonal complement under the local Tate duality
\[
\rH^1(\Q_{p,n},E[p^\infty])\times\rH^1(\Q_{p,n},T)\rightarrow\Q_p/\Z_p
\]
and ${\rm Col}_n^\pm:\rH^1(\Q_{p,n},T)\rightarrow\Z_p[{\rm Gal}(\Q_{p,n}/\Q_p)]$ are \emph{signed Coleman maps} constructed in \cite{kobayashi} using Honda's theory of formal groups. In this section, we recall a result of Lei \cite{lei-PhD} giving an independent construction of Kobayashi's 
\[
{\rm Col}^\pm:=\varprojlim_n\rH^1_{\rm Iw}(\Q_{p,\infty},T)\rightarrow\Lambda
\]
in terms of the map $\Omega_{V^*(1)}$ of Theorem~\ref{thm:PR-map}.

\subsection{Logarithm matrix}

Put 
\[
\log_p^+=\frac{1}{p}\prod_{m=1}^\infty\frac{\Phi_{2m}(1+X)}{p},\quad\quad
\log_p^-=\frac{1}{p}\prod_{m=1}^\infty\frac{\Phi_{2m-1}(1+X)}{p},
\]
where $\Phi_n(X)=\sum_{i=1}^{p-1}X^{p^{n-1}i}$ is the $p^n$-th cyclotomic polynomial.

\begin{defi}\label{def:log-matrix}
Let $\alpha,\beta\in\{\pm\sqrt{-p}\}$ be the roots of $x^2-a_px+p$ (recall that we assume $a_p=0$), and define the \emph{logarithm matrix} $M_{\rm log}\in M_{2\times 2}(\mathcal{H})$ by
\[
M_{\rm log}:=\begin{pmatrix}
{\rm log}_p^+ & {\rm log}_p^+\\
\alpha{\rm log}_p^- & \beta{\rm log}_p^-
\end{pmatrix}.
\]
\end{defi}

\subsection{A result of Lei}

Given $\eta\in\mathbf{D}_{\rm dR}(V^*(1))$, we define the \emph{Coleman map}
\begin{equation}\label{eq:Col-eta}
{\rm Col}_\eta:\rH^1_{\rm Iw}(\Q_{p,\infty},V)\rightarrow\mathcal{H}(\Gamma) 
\end{equation}
by $\mathbf{z}\mapsto\left\langle\Omega_{V^*(1)}(\eta\otimes(1+X)),\mathbf{z}\right\rangle_{\Q_{p,\infty}}$.  Thus, note that ${\rm Col}_\eta(\mathbf{z})=\mathscr{L}_{\mathbf{z}}(\eta)$ by definition.


%


\begin{thm}\label{thm:Lei-PhD}
Let $\eta_\alpha,\eta_\beta\in\mathbf{D}_{\rm dR}(V^*(1))\simeq D_p(E)$ be the unique vectors satisfying 
\[
\varphi(\eta_{\alpha})=\alpha^{-1}\eta_\alpha,\quad\quad 
\varphi(\eta_\beta)=\beta^{-1}\eta_\beta,\quad\quad 
[\eta_\alpha,\omega_E]_{\rm dR}=[\eta_\beta,\omega_E]_{\rm dR}=1.
\] 
Then for any $\mathbf{z}\in\rH^1_{\rm Iw}(\Q_{p,\infty},T)$ we have the decomposition
\begin{equation}\label{eq:dec-Col}
({\rm Col}_{\eta_\beta}(\mathbf{z}),{\rm Col}_{\eta_\alpha}(\mathbf{z}))=
({\rm Col}^-(\mathbf{z}),{\rm Col}^+(\mathbf{z}))\,M_{{\rm log}},
\end{equation}
where $M_{\rm log}\in M_{2\times 2}(\mathcal{H}(\Gamma))$ is the logarithm matrix of Definition~\ref{def:log-matrix} with $X=\gamma-1$.
\end{thm}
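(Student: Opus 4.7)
The plan is to verify the identity \eqref{eq:dec-Col} in $\mathcal{H}(\Gamma)^{\oplus 2}$ by evaluating both sides at the characters $\chi\colon\Gamma\to\overline{\Q}_p^\times$ of finite order and appealing to the density of such characters in ${\rm Spec}\,\mathcal{H}(\Gamma)$; since both sides define rigid analytic functions on the open $p$-adic unit disc, matching values at infinitely many such characters (of each parity of conductor $p^n$) will suffice.

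First I would unfold the LHS using the interpolation property of $\Omega_{V^*(1)}$ from Theorem~\ref{thm:PR-map}. For a character $\chi$ of conductor $p^n$ with $n\geq 1$, combining that interpolation with the compatibility between Perrin-Riou's $\Lambda$-adic pairing and local Tate duality at finite level yields, up to explicit universal Euler-type constants,
\[
\chi\bigl({\rm Col}_\eta(\mathbf{z})\bigr)\;\sim\;p^{-n}\bigl[(1-p^{-1}\varphi^{-1})(1-\varphi)^{-1}\varphi^{-n}\eta,\,\exp^{*}_{\Q_{p,n},V}(z_n^\chi)\bigr]_{\rm dR},
\]
where $z_n^\chi$ denotes the $\chi$-component of $\mathbf{z}$ at level $n$. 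Specializing to $\eta\in\{\eta_\alpha,\eta_\beta\}$, the eigenvalue conditions $\varphi\eta_\alpha=\alpha^{-1}\eta_\alpha$ and $\varphi\eta_\beta=\beta^{-1}\eta_\beta$ turn $\varphi^{-n}$ into multiplication by $\alpha^n$ or $\beta^n$; combined with $\alpha\beta=p$ and $\alpha+\beta=0$ (since $a_p=0$), all the $(1-\varphi)^{-1}(1-p^{-1}\varphi^{-1})$ factors collapse to explicit monomials in $\alpha$ and $\beta$.

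The second step invokes the defining interpolation of Kobayashi's ${\rm Col}^\pm$: $\chi({\rm Col}^+(\mathbf{z}))=0$ for $\chi$ of even conductor $p^{2m}$, while $\chi({\rm Col}^-(\mathbf{z}))=0$ for $\chi$ of odd conductor $p^{2m-1}$, with nonzero values at the complementary characters expressible in terms of $[\omega_E,\exp^{*}(z_n^\chi)]_{\rm dR}$. The vanishing pattern is exactly mirrored by the entries of $M_{\log}$: since $\log_p^+=\frac{1}{p}\prod_m\Phi_{2m}(1+X)/p$ vanishes precisely at odd-conductor characters and $\log_p^-$ at even-conductor ones, $M_{\log}$ supplies the fractional Euler-like factors missing from the integral ${\rm Col}^\pm$ in the opposite parity. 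Verification then reduces to a direct finite check: at an odd-conductor $\chi$, the RHS collapses to $\chi({\rm Col}^+(\mathbf{z}))\cdot(\alpha\log_p^-(\chi),\,\beta\log_p^-(\chi))$, which one matches against the specialization of the LHS using the Frobenius action $\varphi^{-n}\eta_\alpha=\alpha^n\eta_\alpha$; the even-conductor case uses $\alpha^{2m}=\beta^{2m}=(-p)^m$, which forces the equality $\chi({\rm Col}_{\eta_\beta}(\mathbf{z}))=\chi({\rm Col}_{\eta_\alpha}(\mathbf{z}))$ there, consistent with both components of $\chi({\rm Col}^-(\mathbf{z}))\cdot\log_p^+(\chi)$ being equal.

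The main technical obstacle is the normalization bookkeeping—the precise constants (Gauss-sum-type twists, the Euler factors at $\alpha$ and $\beta$, and signs) that bridge Perrin-Riou's $(1-\varphi)$-formalism with Kobayashi's Honda-theoretic construction of ${\rm Col}^\pm$ via plus/minus norm-compatible points in $E(\Q_{p,n})$. This is the content of Lei's thesis \cite{lei-PhD}, where the comparison is effected by factoring $\Omega_{V^*(1)}$ through the $\Lambda$-module of integral Coleman power series for the formal group of $E$, and identifying the resulting two-column factorization with Kobayashi's signed decomposition. Once the character-by-character comparison is complete, the density argument concludes.
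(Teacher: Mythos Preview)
Your strategy—compare both sides of \eqref{eq:dec-Col} at finite-order characters via the interpolation formulae for $\Omega_{V^*(1)}$ and for Kobayashi's ${\rm Col}^\pm$—is essentially the right idea, and it is also what underlies Lei's argument. But the density step as you state it does not go through: elements of $\mathcal{H}(\Gamma)$ are \emph{not} determined by their values at finite-order characters. The standard counterexample is $\log_p(1+X)\in\mathcal{H}$, which vanishes at every $X=\zeta-1$ with $\zeta$ a $p$-power root of unity yet is nonzero. So ``matching values at infinitely many such characters will suffice'' is false in $\mathcal{H}(\Gamma)^{\oplus 2}$; it is only true for bounded power series (where Weierstrass preparation bounds the number of zeros).

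The paper's proof (following \cite{lei-PhD}) sidesteps this by reorganizing the logic. Rather than verifying the full identity at characters, one first passes to the basis $\eta^-=\frac{\beta\eta_\alpha-\alpha\eta_\beta}{\beta-\alpha}$, $\eta^+=\frac{\eta_\beta-\eta_\alpha}{\beta-\alpha}$ and shows that ${\rm Col}_{\eta^\pm}(\mathbf{z})$ \emph{vanishes} at the zeros of $\log_p^\pm$; this is a pure vanishing statement, and combined with the growth of ${\rm Col}_{\eta^\pm}$ it yields divisibility ${\rm Col}_{\eta^\pm}(\mathbf{z})/\log_p^\pm\in\Lambda\otimes\Q_p$. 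The matrix relation \eqref{eq:dec-Col} is then an algebraic change of basis, with no analytic density needed. What remains is to identify the \emph{bounded} quotients ${\rm Col}_{\eta^\pm}/\log_p^\pm$ with Kobayashi's ${\rm Col}^\pm$; this is where your character comparison becomes legitimate (bounded functions agreeing at infinitely many points are equal), and the paper carries it out via the intermediary of Kurihara's pairings $P_n$ rather than a direct computation of constants. Your plan would work if restructured this way; as written, the density claim is the gap.
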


\begin{proof}
The existence of unique $\eta_\alpha,\eta_\beta$ satisfying the conditions in the statement is shown in \cite[Thm.~16.6]{kato-295}, while the proof of the decomposition \eqref{eq:dec-Col} is given in \cite[\S{3.4.2}]{lei-PhD}. More precisely, the Coleman maps ${\rm Col}_{\eta^\pm}$ of \eqref{eq:Col-eta} associated to the vectors $\eta^-:=\frac{\beta\eta_\alpha-\alpha\eta_\beta}{\beta-\alpha}$, $\eta^+:=\frac{\eta_\beta-\eta_\alpha}{\beta-\alpha}$ are shown to be divisible by $\log_p^\pm$, respectively\footnote{Note that for consistency with \cite{kobayashi} our signs are opposite to \cite{lei-PhD}.}, upon evaluation at any $\mathbf{z}\in\rH^1_{\rm Iw}(\Q_{p,\infty},T)$. That the maps
\[ 
{\rm Col}^\pm:\mathbf{z}\mapsto
{\rm Col}_{\eta^\pm}(\mathbf{z})/{\rm log}_p^\pm
\] 
satisfy the relation in the statement is then clear; and that they agree with 
the signed Coleman maps ${\rm Col}^\pm$ in \cite{kobayashi} follows from a relation between both constructions and the pairings $P_n$ introduced by Kurihara \cite{kurihara-sha} (see also \cite[Rem.~3.16]{lei-PhD}).
\end{proof}

\begin{rem}
Letting $f\in S_2(\Gamma_0(N))$ be the newform associated to $E$ by modularity, note that our vector $\eta_\alpha\in\mathbf{D}_{\rm dR}(V^*(1))$ corresponds to $\eta_\beta\in\mathbf{D}_{\rm dR}(V_f)\simeq\mathbf{D}_{\rm dR}(V^*)$ (with Frobenius eigenvalue $\beta$) in Lei's notation; and likewise our $\eta_\beta$ corresponds to $\eta_\alpha$ in \cite{lei-PhD}. In particular, by Kato's reciprocity law (as recalled in [\emph{op.\,cit.}, Thm.~3.10]), Kato's zeta element $\mathbf{z}^{\rm Kato}$ satisfies 
\[
{\rm Col}_{\eta_\beta}(\mathbf{z}^{\rm Kato})=L_{p,\alpha},
\] 
where $L_{p,\alpha}$ denotes the $p$-adic $L$-function of \cite{MTT} associated to $f$ and the allowable root $\alpha$; and likewise ${\rm Col}_{\eta_\alpha}(\mathbf{z}^{\rm Kato})=L_{p,\beta}$.
\end{rem}




\section{Coordinate computations}\label{sec:coordinates}

The main result of this section is the computation of the coordinates of the modified Perrin-Riou's $p$-adic regulator appearing in Theorem~\ref{thm:PR} relative to an ordered basis $(\nu_-,\nu_+)$ of $D_p(E)$ motivated by the decomposition in  Theorem~\ref{thm:Lei-PhD}.

\subsection{Dual bases} Recall that $\omega_E\in D_p(E)$ denotes the class of a fixed N\'{e}ron differential.

\begin{lemma}\label{lem:dual-basis}
Put
\[
\nu_{\alpha}:=\frac{1}{2}\,(\omega_E-\beta\varphi(\omega_E)),\quad\quad \nu_{\beta}:=\frac{1}{2}\,(\omega_E-\alpha\varphi(\omega_E)).
\]
Let $\eta_\alpha,\eta_\beta\in\mathbf{D}_{\rm dR}(V^*(1))\simeq D_p(E)$ be as in Theorem~\ref{thm:Lei-PhD}. Then $(\nu_\alpha,\nu_\beta)$ and $(\eta_\beta,\eta_\alpha)$ are dual bases of $D_p(E)$ under $[\cdot,\cdot]_{\rm dR}$, in the sense that
\begin{align*}
[\eta_\alpha,\nu_\alpha]_{\rm dR}&=
[\eta_\beta,\nu_\beta]_{\rm dR}=0,\quad\quad
[\eta_\alpha,\nu_\beta]_{\rm dR}=
[\eta_\beta,\nu_\alpha]_{\rm dR}=1.
\end{align*}
\end{lemma}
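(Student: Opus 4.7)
The plan is a direct linear-algebra verification of the four pairing identities, leveraging two preliminary observations. First, because $a_p=0$, the roots $\alpha,\beta\in\{\pm\sqrt{-p}\}$ of $x^2+p$ satisfy $\alpha\beta=p$, $\alpha+\beta=0$, and consequently $\alpha^2=\beta^2=-p$. Second, because the Frobenius $\varphi$ has characteristic polynomial $x^2+\tfrac{1}{p}$ on $D_p(E)$, its determinant equals $\tfrac{1}{p}$; since $[\cdot,\cdot]_{\rm dR}$ is a nondegenerate alternating form on the two-dimensional space $D_p(E)$, it is therefore scaled by $\det(\varphi)$ under $\varphi$, i.e.
\[
[\varphi(x),\varphi(y)]_{\rm dR}=\tfrac{1}{p}\,[x,y]_{\rm dR}
\]
for all $x,y\in D_p(E)$. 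This identity can also be read off directly from the computation $[\varphi(\eta_\alpha),\varphi(\eta_\beta)]_{\rm dR}=(\alpha\beta)^{-1}[\eta_\alpha,\eta_\beta]_{\rm dR}$ on the $\varphi$-eigenbasis $(\eta_\alpha,\eta_\beta)$ of $\mathbf{D}_{\rm dR}(V^*(1))\simeq D_p(E)$.

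The heart of the proof is then to evaluate $[\eta_\alpha,\varphi(\omega_E)]_{\rm dR}$. Since $\varphi(\eta_\alpha)=\alpha^{-1}\eta_\alpha$ implies $\varphi^{-1}(\eta_\alpha)=\alpha\eta_\alpha$, the Frobenius-scaling relation gives
\[
[\eta_\alpha,\varphi(\omega_E)]_{\rm dR}=[\varphi(\alpha\eta_\alpha),\varphi(\omega_E)]_{\rm dR}=\tfrac{\alpha}{p}\,[\eta_\alpha,\omega_E]_{\rm dR}=\tfrac{\alpha}{p},
\]
and by the same argument $[\eta_\beta,\varphi(\omega_E)]_{\rm dR}=\tfrac{\beta}{p}$. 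Substituting these into the definitions of $\nu_\alpha,\nu_\beta$ and using $\alpha\beta=p$ and $\alpha^2=\beta^2=-p$ yields, for example,
\[
[\eta_\alpha,\nu_\alpha]_{\rm dR}=\tfrac{1}{2}\bigl(1-\beta\cdot\tfrac{\alpha}{p}\bigr)=0,\qquad
[\eta_\alpha,\nu_\beta]_{\rm dR}=\tfrac{1}{2}\bigl(1-\tfrac{\alpha^2}{p}\bigr)=1,
\]
and the analogous two identities for $\eta_\beta$ follow symmetrically by swapping $\alpha\leftrightarrow\beta$.

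Finally, I will note that $(\nu_\alpha,\nu_\beta)$ is in fact a basis of $D_p(E)$: a short calculation using $\varphi^2=-\tfrac{1}{p}$ on $D_p(E)$ shows that $\nu_\alpha,\nu_\beta$ are $\varphi$-eigenvectors with distinct eigenvalues $\alpha^{-1},\beta^{-1}$, which forces linear independence. (Equivalently, supersingularity forces $\omega_E\notin{\rm Fil}^0D_p(E)$ to not be a $\varphi$-eigenvector, so $(\omega_E,\varphi(\omega_E))$ is a basis, and the change-of-basis matrix to $(\nu_\alpha,\nu_\beta)$ has determinant $(\beta-\alpha)/4\neq 0$.) No serious obstacle arises; the only conceptual input is the Frobenius-equivariance of the de~Rham pairing up to the scalar $\det(\varphi)=\tfrac{1}{p}$, and everything else is bookkeeping with the relations $\alpha\beta=p$ and $\alpha^2=\beta^2=-p$.
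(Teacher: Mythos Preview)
Your proof is correct. The key step---computing $[\eta_\alpha,\varphi(\omega_E)]_{\rm dR}=\alpha/p$ via the Frobenius-equivariance relation $[\varphi(x),\varphi(y)]_{\rm dR}=\det(\varphi)\,[x,y]_{\rm dR}=\tfrac{1}{p}[x,y]_{\rm dR}$---is valid, and the four pairing identities then fall out uniformly by substitution.

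The paper argues somewhat differently. It first verifies that $\nu_\alpha,\nu_\beta$ are themselves $\varphi$-eigenvectors with eigenvalues $\alpha^{-1},\beta^{-1}$; since $\eta_\alpha$ lies in the same one-dimensional eigenspace as $\nu_\alpha$, the vanishing $[\eta_\alpha,\nu_\alpha]_{\rm dR}=0$ is immediate from the alternating property (and likewise for $\beta$). For the nonzero pairings the paper writes $\eta_\alpha,\eta_\beta$ explicitly as scalar multiples of $\nu_\alpha,\nu_\beta$ determined by the normalization $[\eta_\bullet,\omega_E]_{\rm dR}=1$, and checks $[\eta_\alpha,\nu_\beta]_{\rm dR}=1$ directly. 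Your route avoids ever writing $\eta_\alpha$ in the $(\omega_E,\varphi(\omega_E))$ basis and instead pushes the Frobenius through the pairing; this makes the computation slightly more uniform (all four identities come from one formula) at the cost of invoking the compatibility of $[\cdot,\cdot]_{\rm dR}$ with $\varphi$, which the paper's argument does not need. Both approaches are short and essentially equivalent in depth; the paper's has the minor conceptual advantage of explaining \emph{why} $\eta_\alpha\parallel\nu_\alpha$, which is used implicitly later.
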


\begin{proof}
From the relations $\varphi^2=1/p$, $\alpha+\beta=0$, and $\alpha\beta=p$ we readily see that $\varphi(\nu_\alpha)=\alpha^{-1}\nu_\alpha$ and $\varphi(\nu_\beta)=\beta^{-1}\nu_\beta$, which implies the first two equalities in the statement  
%
by the alternating property of $[\cdot,\cdot]_{\rm dR}$. On the other hand, noting that the classes $\eta_\alpha$ and $\eta_\beta$ are necessarily multiples of $\nu_\alpha$ and $\nu_\beta$, respectively, from the defining relations $[\eta_\alpha,\omega_E]_{\rm dR}=[\eta_\beta,\omega_E]_{\rm dR}=1$ in Theorem~\ref{thm:Lei-PhD} we find
\[
\eta_\alpha=\frac{-1}{[\beta\varphi(\omega_E),\omega_E]_{\rm dR}}(\omega_E-\beta\varphi(\omega_E)),\quad\quad
\eta_\beta=\frac{-1}{[\alpha\varphi(\omega_E),\omega_E]_{\rm dR}}(\omega_E-\alpha\varphi(\omega_E)),
\]
and this yields the equalities $[\eta_\alpha,\nu_\beta]_{\rm dR}=[\eta_\beta,\nu_\alpha]_{\rm dR}=1$.
\end{proof}

\begin{lemma}\label{lem:PR-ab}
In terms of the basis $(\nu_\alpha,\nu_\beta)$ of $D_p(E)$ in  Lemma~\ref{lem:dual-basis}, we have
\[
{\rm Reg}_p^{\rm PR}=
\frac{{\rm Reg}_{\nu_\beta}}{[\omega_E,\nu_\beta]_{\rm dR}^r}\,\nu_\alpha+\frac{{\rm Reg}_{\nu_\alpha}}{[\omega_E,\nu_\alpha]_{\rm dR}^r}\,\nu_\beta.
\]
\end{lemma}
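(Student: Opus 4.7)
The plan is to compute the coordinates of ${\rm Reg}_p^{\rm PR}$ in the basis $(\nu_\alpha,\nu_\beta)$ by pairing against the dual basis supplied by Lemma~\ref{lem:dual-basis} and then invoking the defining property of ${\rm Reg}_p^{\rm PR}$ from Lemma~\ref{lem:reg-p}. Concretely, writing ${\rm Reg}_p^{\rm PR}=A\nu_\alpha+B\nu_\beta$, the dual basis relations $[\eta_\alpha,\nu_\alpha]_{\rm dR}=[\eta_\beta,\nu_\beta]_{\rm dR}=0$ and $[\eta_\alpha,\nu_\beta]_{\rm dR}=[\eta_\beta,\nu_\alpha]_{\rm dR}=1$ immediately give $A=[\eta_\beta,{\rm Reg}_p^{\rm PR}]_{\rm dR}$ and $B=[\eta_\alpha,{\rm Reg}_p^{\rm PR}]_{\rm dR}$, so it remains to identify these two pairings.

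The bridge is the observation that since $\alpha\neq\beta$, the $\varphi$-eigenspaces in $D_p(E)$ are one-dimensional, so the eigenvector equations $\varphi(\nu_\alpha)=\alpha^{-1}\nu_\alpha=\varphi(\eta_\alpha)/\eta_\alpha\cdot\eta_\alpha$ (and similarly for $\beta$) force $\eta_\alpha=c_\alpha\nu_\alpha$ and $\eta_\beta=c_\beta\nu_\beta$ for scalars $c_\alpha,c_\beta\in\Q_p^\times$. Applying the normalizations $[\eta_\alpha,\omega_E]_{\rm dR}=[\eta_\beta,\omega_E]_{\rm dR}=1$ from Theorem~\ref{thm:Lei-PhD} determines these scalars explicitly as $c_\alpha=-[\omega_E,\nu_\alpha]_{\rm dR}^{-1}$ and $c_\beta=-[\omega_E,\nu_\beta]_{\rm dR}^{-1}$.

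Now neither $\eta_\alpha$ nor $\eta_\beta$ lies in ${\rm Fil}^0D_p(E)=\Q_p\omega_E$ (as $\omega_E$ is not a $\varphi$-eigenvector), so Lemma~\ref{lem:reg-p} applies and gives $[{\rm Reg}_p^{\rm PR},\eta_\alpha]_{\rm dR}={\rm Reg}_{\eta_\alpha}/[\omega_E,\eta_\alpha]_{\rm dR}^{r-1}$, and similarly with $\alpha$ replaced by $\beta$. The quadratic form $h_\nu$ being linear in $\nu$ yields the scaling ${\rm Reg}_{c\nu}=c^r{\rm Reg}_\nu$, so ${\rm Reg}_{\eta_\alpha}=c_\alpha^r{\rm Reg}_{\nu_\alpha}$ and $[\omega_E,\eta_\alpha]_{\rm dR}^{r-1}=c_\alpha^{r-1}[\omega_E,\nu_\alpha]_{\rm dR}^{r-1}=(-1)^{r-1}$. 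Substituting the value of $c_\alpha$ and flipping sign for the pairing $[\eta_\alpha,{\rm Reg}_p^{\rm PR}]_{\rm dR}$, the various factors of $-1$ collapse and one obtains $B={\rm Reg}_{\nu_\alpha}/[\omega_E,\nu_\alpha]_{\rm dR}^r$; the symmetric computation yields $A={\rm Reg}_{\nu_\beta}/[\omega_E,\nu_\beta]_{\rm dR}^r$, proving the lemma.

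There is no conceptual obstacle here beyond bookkeeping; the only delicate step is keeping track of the signs introduced by the alternating property of $[\cdot,\cdot]_{\rm dR}$ and by the parity of $r$, and verifying they cancel consistently in both coordinates.
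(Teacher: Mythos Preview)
Your argument is correct. The paper's proof follows the same overall strategy---write ${\rm Reg}_p^{\rm PR}=a\nu_\alpha+b\nu_\beta$ and use the defining property from Lemma~\ref{lem:reg-p} together with the alternating pairing to solve for the coefficients---but it avoids your detour through the dual basis $(\eta_\beta,\eta_\alpha)$. Instead of pairing against $\eta_\alpha,\eta_\beta$ and then unwinding the scaling $\eta_\alpha=c_\alpha\nu_\alpha$, the paper simply applies Lemma~\ref{lem:reg-p} directly with $\nu=\nu_\alpha$ (which is legitimate since $\nu_\alpha\notin{\rm Fil}^0D_p(E)$), obtaining
\[
\widetilde{\rm Reg}_{\nu_\alpha}=[{\rm Reg}_p^{\rm PR},\nu_\alpha]_{\rm dR}=[b\nu_\beta,\nu_\alpha]_{\rm dR}=b[\omega_E,\nu_\alpha]_{\rm dR},
\]
where the last equality uses only $\nu_\alpha+\nu_\beta=\omega_E$ and $[\nu_\alpha,\nu_\alpha]_{\rm dR}=0$. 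This gives $b$ in one line, with no sign-tracking needed. Your route works equally well and has the virtue of making explicit use of Lemma~\ref{lem:dual-basis}, but the paper's shortcut is cleaner: once you realize $\nu_\alpha,\nu_\beta$ themselves lie outside ${\rm Fil}^0D_p(E)$, there is no need to introduce $\eta_\alpha,\eta_\beta$ or the scaling ${\rm Reg}_{c\nu}=c^r{\rm Reg}_\nu$ at all.
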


\begin{proof}
Writing ${\rm Reg}_p^{\rm PR}=a\nu_\alpha+b\nu_\beta$,  using  the defining property of ${\rm Reg}_p^{\rm PR}$, the  relation $\nu_\alpha+\nu_\beta=\omega_E$, and the fact that $[\cdot,\cdot]_{\rm dR}$ is alternating, we find
\[
\widetilde{\rm Reg}_{\nu_\alpha}=
\left[{\rm Reg}_p^{\rm PR},\nu_\alpha\right]_{\rm dR}=[b\nu_\beta,\nu_\alpha]_{\rm dR}=b[\omega_E,\nu_\alpha]_{\rm dR},
\]
and so $b=\widetilde{\rm Reg}_{\nu_\alpha}/[\omega_E,\nu_\alpha]_{\rm dR}$ 
as claimed. Similarly, we find $a=\widetilde{\rm Reg}_{\nu_\beta}/[\omega_E,\nu_\beta]_{\rm dR}={\rm Reg}_{\nu_\beta}/[\omega_E,\nu_\beta]_{\rm dR}^r$, whence the result.
\end{proof}

\subsection{The modified regulator 
$(1-\varphi)^2{\rm Reg}_p^{\rm PR}$ in coordinates}

The main result of this section is Proposition~\ref{prop:coordinates}. In the context of the analytic $p$-adic $L$-functions of \cite{sprung-JNT},  similar computations were performed by Sprung \cite{sprung-RNT}, whose notations we largely follow. 

\begin{defi}\label{def:nu-pm}
Put $Z_{\rm log}:=M_{\rm log}\vert_{X=0}=\frac{1}{p}\bigl(\begin{smallmatrix}
1 & 1\\
\alpha & \beta
\end{smallmatrix}\bigr)$, and let $N_\pm,\nu_\pm\in D_p(E)$ be the vectors given by
\[
(N_-,N_+)=(\nu_\beta,-\nu_\alpha)\begin{pmatrix}(1-\alpha^{-1})^2&\\&(1-\beta^{-1})^2\end{pmatrix}Z_{\rm log}^{-1}\cdot\det(Z_{\rm log}),\quad\quad
\begin{pmatrix}\nu_-\\\nu_+\end{pmatrix}=Z_{\rm log}\begin{pmatrix}\nu_\alpha\\\nu_\beta\end{pmatrix}.
\]
%
\end{defi}
 
Note that the pair $(\nu_-,\nu_+)$ is a basis of $D_p(E)$. We also note that 
%
the introduction of $N_{\pm}$ (resp. $\nu_\pm$) is motivated by the result of the computation in Proposition~\ref{prop:coordinates} (resp. the computation leading to \eqref{eq:prod-rule}) below.

\begin{lemma}\label{lem:N-pm}
We have $N_\pm\not\in{\rm Fil}^0D_p(E)$.
\end{lemma}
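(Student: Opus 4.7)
The plan is a direct coordinate computation: expand $N_\pm$ in the basis $(\omega_E, \varphi(\omega_E))$ of $D_p(E)$ (which is indeed a basis, since $\varphi$ has eigenvalues $\alpha^{-1},\beta^{-1}\notin\Q_p$ while $\omega_E\in D_p(E)$ is defined over $\Q$, hence cannot be a $\varphi$-eigenvector) and check that the coefficient of $\varphi(\omega_E)$ is nonzero; since ${\rm Fil}^0D_p(E)=\Q_p\omega_E$, this is exactly what is needed.

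First I would invert $Z_{\rm log}=\frac{1}{p}\bigl(\begin{smallmatrix}1&1\\\alpha&\beta\end{smallmatrix}\bigr)$, obtaining $\det(Z_{\rm log})=(\beta-\alpha)/p^{2}$ and
\[
Z_{\rm log}^{-1}\cdot\det(Z_{\rm log})\;=\;\tfrac{1}{p}\begin{pmatrix}\beta & -1\\-\alpha & 1\end{pmatrix}.
\]
Substituting into Definition~\ref{def:nu-pm} and carrying out the row-vector/matrix product gives
\[
N_-\;=\;\tfrac{1}{p}\bigl[\beta(1-\alpha^{-1})^{2}\,\nu_\beta+\alpha(1-\beta^{-1})^{2}\,\nu_\alpha\bigr],\qquad
N_+\;=\;-\tfrac{1}{p}\bigl[(1-\alpha^{-1})^{2}\,\nu_\beta+(1-\beta^{-1})^{2}\,\nu_\alpha\bigr].
\]

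Next I would pass from $(\nu_\alpha,\nu_\beta)$ to $(\omega_E,\varphi(\omega_E))$ via Lemma~\ref{lem:dual-basis}: from $\nu_\alpha=\tfrac{1}{2}(\omega_E-\beta\varphi(\omega_E))$ and $\nu_\beta=\tfrac{1}{2}(\omega_E-\alpha\varphi(\omega_E))$, the $\varphi(\omega_E)$-coefficient in $N_+$ equals
\[
\tfrac{1}{2p}\bigl[\alpha(1-\alpha^{-1})^{2}+\beta(1-\beta^{-1})^{2}\bigr]
\;=\;\tfrac{1}{2p}\bigl[(\alpha+\beta)+(\alpha^{-1}+\beta^{-1})-4\bigr]\;=\;-\tfrac{2}{p},
\]
where I used $a_p=\alpha+\beta=0$ (so also $\alpha^{-1}+\beta^{-1}=0$). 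Similarly, the $\varphi(\omega_E)$-coefficient in $N_-$ equals
\[
-\tfrac{\alpha\beta}{2p}\bigl[(1-\alpha^{-1})^{2}+(1-\beta^{-1})^{2}\bigr]\;=\;-\tfrac{1}{2}\bigl[2+(\alpha^{-2}+\beta^{-2})\bigr]\;=\;-\tfrac{p-1}{p},
\]
using $\alpha\beta=p$ and $\alpha^{2}=\beta^{2}=-p$ (so $\alpha^{-2}+\beta^{-2}=-2/p$). Both are nonzero (the second because $p>2$), which gives $N_\pm\notin\Q_p\omega_E={\rm Fil}^0D_p(E)$.

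There is essentially no obstacle: the statement follows from the algebraic identities $\alpha+\beta=0$, $\alpha\beta=p$, $\alpha^{2}=\beta^{2}=-p$, which are exactly the relations built into the hypothesis $a_p=0$. The only mild pitfall is keeping track of signs and conventions in the row/column placement of $(\nu_\beta,-\nu_\alpha)$ versus the diagonal matrix, so I would write out the $2\times 2$ product once and for all before extracting the coefficient of $\varphi(\omega_E)$.
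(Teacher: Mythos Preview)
Your proof is correct and follows essentially the same plan as the paper: expand $N_\pm$ from Definition~\ref{def:nu-pm} in terms of $\nu_\alpha,\nu_\beta$, then test for membership in ${\rm Fil}^0D_p(E)=\Q_p\omega_E$. The only difference is in how that last test is carried out. The paper pairs with $\omega_E$ and shows $[\omega_E,N_\pm]_{\rm dR}\neq 0$, reducing to $[\omega_E,\nu_\beta]_{\rm dR}\neq 0$ via weak admissibility; you instead extract the $\varphi(\omega_E)$-coefficient after rewriting $\nu_\alpha,\nu_\beta$ in the basis $(\omega_E,\varphi(\omega_E))$, reducing to the elementary observation that $\alpha^{-1},\beta^{-1}\notin\Q_p$. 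These are equivalent (since $[\omega_E,\omega_E]_{\rm dR}=0$, the pairing $[\omega_E,\,\cdot\,]_{\rm dR}$ reads off precisely a nonzero multiple of the $\varphi(\omega_E)$-coordinate), and your numerical factors $-2/p$ and $-(p-1)/p$ match the paper's $4/\alpha$ and $2\alpha(p-1)/(-p)$ up to the factor $[\omega_E,\varphi(\omega_E)]_{\rm dR}$. One practical advantage of the paper's bookkeeping is that the explicit values of $[\omega_E,N_\pm]_{\rm dR}$ are recorded as \eqref{eq:pair-N-}--\eqref{eq:pair-N+} and reused in the proof of Proposition~\ref{prop:coordinates}; your route is slightly more self-contained but does not directly produce those pairing values.
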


\begin{proof}
It suffices to show $[\omega_E,N_{\pm}]_{\rm dR}\neq 0$. Directly from the definition we have
\[
(N_-,N_+)=\bigl((1-\alpha^{-1})^2\beta\nu_\beta+(1-\beta^{-1})^2\alpha\nu_\alpha,-(1-\alpha^{-1})^2\nu_\beta-(1-\beta^{-1})^2\nu_\alpha\bigr).
\]
Thus from the relation $\omega_E=\nu_\alpha+\nu_\beta$ we obtain
\begin{equation}\label{eq:pair-N-}
\begin{aligned}
[\omega_E,N_-]_{\rm dR}&=\bigl((1-\alpha^{-1})^2\beta-(1-\beta^{-1})^2\alpha\bigr)\,[\omega_E,\nu_\beta]_{\rm dR}=\frac{2\alpha(p-1)}{-p}\cdot[\omega_E,\nu_\beta]_{\rm dR};
\end{aligned}
\end{equation}
and similarly,
\begin{equation}\label{eq:pair-N+}
\begin{aligned}
[\omega_E,N_+]_{\rm dR}&=\bigl((1-\beta^{-1})^2-(1-\alpha^{-1})^2\bigr)\,[\omega_E,\nu_\beta]_{\rm dR}=\frac{4}{\alpha}\cdot[\omega_E,\nu_\beta]_{\rm dR}.
\end{aligned}
\end{equation}
Since $[\omega_E,\nu_\beta]_{\rm dR}\neq 0$ by weak-admissibility of $D_p(E)$ (see e.g. \cite[\S{3.2}]{ghate-mezard}) and the non-degeneracy of $[\cdot,\cdot]_{\rm dR}$, the result follows.
\end{proof}

\begin{rem}
From the definitions, we directly find 
$(N_+,N_-)=(\frac{1}{p}-1)\omega_E-2\varphi(\omega_E),2\omega_E+(1-p)\varphi(\omega_E))$ 
consistently with Lemma~\ref{lem:N-pm}, but the above computations will be useful later.
\end{rem}

\begin{prop}\label{prop:coordinates}
Suppose $r={\rm rank}_\Z E(\Q)\geq 1$. Then the coordinates $(c_+,c_-)$  of $(1-\varphi)^2{\rm Reg}^{\rm PR}_p\in D_p(E)$ with respect to the ordered basis $(\nu_+,\nu_-)$ are given by
\[
(c_+,c_-)=\Bigl(2\,\frac{{\rm Reg}_{N_+}}{[\omega_E,N_+]_{\rm dR}^r},(p-1)\,\frac{{\rm Reg}_{N_-}}{[\omega_E,N_-]_{\rm dR}^r}\Bigr).
\] 
\end{prop}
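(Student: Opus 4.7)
The plan is to reduce the proof to a direct bookkeeping exercise, with the two nontrivial inputs being (a) the diagonal action of $(1-\varphi)^2$ in the basis $(\nu_\alpha,\nu_\beta)$ of Frobenius eigenvectors, and (b) the linearity of $\nu\mapsto\widetilde{\rm Reg}_\nu$, which is not obvious from Definition~\ref{def:reg-nu} but is forced on us by Lemma~\ref{lem:reg-p}, since $\nu\mapsto[{\rm Reg}_p^{\rm PR},\nu]_{\rm dR}$ is visibly linear.

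First I would rewrite Lemma~\ref{lem:PR-ab} in the compact form ${\rm Reg}_p^{\rm PR}=R_\alpha\nu_\alpha+R_\beta\nu_\beta$, where $R_\alpha={\rm Reg}_{\nu_\beta}/[\omega_E,\nu_\beta]_{\rm dR}^r$ and $R_\beta={\rm Reg}_{\nu_\alpha}/[\omega_E,\nu_\alpha]_{\rm dR}^r$. Because $\varphi\nu_\alpha=\alpha^{-1}\nu_\alpha$ and $\varphi\nu_\beta=\beta^{-1}\nu_\beta$, this immediately gives
\[
(1-\varphi)^2{\rm Reg}_p^{\rm PR}=u R_\alpha\,\nu_\alpha+v R_\beta\,\nu_\beta,\qquad u:=(1-\alpha^{-1})^2,\ v:=(1-\beta^{-1})^2.
\]
Then I would invert $Z_{\rm log}$ (using $\alpha+\beta=0$, $\alpha\beta=p$) to write $\nu_\alpha,\nu_\beta$ as explicit combinations of $\nu_-,\nu_+$, and read off the coordinates $(c_+,c_-)$ of $(1-\varphi)^2{\rm Reg}_p^{\rm PR}$ as explicit $\Q_p$-linear combinations of $R_\alpha,R_\beta$.

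Next, I would convert the target side. By the linearity of $\widetilde{\rm Reg}_\bullet$ mentioned above, applied to the expansions of $N_\pm$ in $(\nu_\alpha,\nu_\beta)$ provided in the proof of Lemma~\ref{lem:N-pm}, I can express $\widetilde{\rm Reg}_{N_\pm}$ as an explicit linear combination of $\widetilde{\rm Reg}_{\nu_\alpha}=R_\beta[\omega_E,\nu_\alpha]_{\rm dR}$ and $\widetilde{\rm Reg}_{\nu_\beta}=R_\alpha[\omega_E,\nu_\beta]_{\rm dR}$. Using the alternating identity $[\omega_E,\nu_\alpha]_{\rm dR}=-[\omega_E,\nu_\beta]_{\rm dR}$, this collapses to a single multiple of $[\omega_E,\nu_\beta]_{\rm dR}$ times a combination of $R_\alpha,R_\beta$. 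Dividing by $[\omega_E,N_\pm]_{\rm dR}$ (for which Lemma~\ref{lem:N-pm} already supplies the values $\tfrac{2\alpha(p-1)}{-p}[\omega_E,\nu_\beta]_{\rm dR}$ and $\tfrac{4}{\alpha}[\omega_E,\nu_\beta]_{\rm dR}$) gives ${\rm Reg}_{N_\pm}/[\omega_E,N_\pm]_{\rm dR}^r=\widetilde{\rm Reg}_{N_\pm}/[\omega_E,N_\pm]_{\rm dR}$ as another explicit linear combination of $R_\alpha,R_\beta$.

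Finally I would match the two expressions, which is a purely symbolic check using $\alpha+\beta=0$, $\alpha\beta=p$, and in particular $\alpha^2=-p$. The constant $2$ in the $\nu_+$-coordinate arises from $-2p/\alpha^2=2$, and the $p-1$ in the $\nu_-$-coordinate arises from the factor in $[\omega_E,N_-]_{\rm dR}$. The only real obstacle is notational: keeping the two distinct bases $(\nu_\alpha,\nu_\beta)$ and $(\nu_+,\nu_-)$, the dual basis pairing conventions from Lemma~\ref{lem:dual-basis}, and the signs arising from $\beta=-\alpha$ straight; no deeper ingredient is needed beyond Lemmas~\ref{lem:PR-ab} and~\ref{lem:N-pm} and the linearity of $\widetilde{\rm Reg}_\bullet$.
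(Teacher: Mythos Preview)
Your proposal is correct and follows essentially the same route as the paper: start from Lemma~\ref{lem:PR-ab}, apply $(1-\varphi)^2$ diagonally in the Frobenius eigenbasis, change to $(\nu_-,\nu_+)$ via $Z_{\rm log}^{-1}$, and then use the linearity of $\nu\mapsto\widetilde{\rm Reg}_\nu$ together with the values \eqref{eq:pair-N-}--\eqref{eq:pair-N+} from Lemma~\ref{lem:N-pm}. The only cosmetic difference is that the paper runs the computation forward in one chain, recognizing the row vector $(\widetilde{\rm Reg}_{\nu_\beta},\widetilde{\rm Reg}_{-\nu_\alpha})\cdot D\cdot Z_{\rm log}^{-1}$ directly as $(\widetilde{\rm Reg}_{N_-},\widetilde{\rm Reg}_{N_+})$ up to the scalar $\det(Z_{\rm log})$ via the matrix definition of $(N_-,N_+)$, whereas you propose to compute source and target separately and match; the content is identical.
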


\begin{proof}
We begin by noting that the association $\nu\mapsto\widetilde{\rm Reg}_\nu={\rm Reg}_\nu/[\omega_E,\nu]^{r-1}_{\rm dR}$ is linear in $\nu\in D_p(E)\smallsetminus{\rm Fil}^0D_p(E)$ (whenever defined), and by Lemma~\ref{lem:N-pm} and its proof the quantities $\widetilde{\rm Reg}_{\nu_\alpha}$, $\widetilde{\rm Reg}_{\nu_\beta}$, $\widetilde{\rm Reg}_{N_+}$, $\widetilde{\rm Reg}_{N_-}$ are all defined. Thus from the expression for ${\rm Reg}^{\rm PR}_p$ in Lemma~\ref{lem:PR-ab} we obtain
\begin{align*}
(1-\varphi)^2{\rm Reg}_p^{\rm PR}&=\Bigl(\frac{{\rm Reg}_{\nu_\beta}}{[\omega_E,\nu_\beta]_{\rm dR}^r},\frac{{\rm Reg}_{\nu_\alpha}}{[\omega_E,\nu_\alpha]_{\rm dR}^r}\Bigr)
\begin{pmatrix}(1-\alpha^{-1})^2&\\&(1-\beta^{-1})^2\end{pmatrix}
\begin{pmatrix}\nu_\alpha\\\nu_\beta\end{pmatrix}\\
&=\Bigl(\frac{\widetilde{\rm Reg}_{\nu_\beta}}{[\omega_E,\nu_\beta]_{\rm dR}},\frac{\widetilde{\rm Reg}_{\nu_\alpha}}{[\omega_E,\nu_\alpha]_{\rm dR}}\Bigr)\begin{pmatrix}(1-\alpha^{-1})^2&\\&(1-\beta^{-1})^2\end{pmatrix}Z_{\rm log}^{-1}\begin{pmatrix}\nu_-\\\nu_+\end{pmatrix}\\
&=\Bigl(\frac{\widetilde{\rm Reg}_{N_-}}{[\omega_E,\nu_\beta]_{\rm dR}},\frac{\widetilde{\rm Reg}_{N_+}}{[\omega_E,\nu_\beta]_{\rm dR}}\Bigr)\begin{pmatrix}\nu_-\\\nu_+\end{pmatrix}\frac{p}{\beta-\alpha},
\end{align*}
using the relations $\widetilde{\rm Reg}_{-\nu_\alpha}=-\widetilde{\rm Reg}_{\nu_\alpha}$ and $[\omega_E,\nu_\beta]_{\rm dR}=-[\omega_E,\nu_\alpha]_{\rm dR}$ 
for the last equality. 
In light of \eqref{eq:pair-N-} and \eqref{eq:pair-N+}, this yields the result.
\end{proof}

\section{Proof of the main result}



As in the Introduction,  we denote by ${\rm Reg}_p^\pm\in\Q_p$ the $p$-adic regulator of Definition~\ref{def:reg-nu} associated to $h_{N_\pm/[\omega_E,N_\pm]_{\rm dR}}$, so  
\[
{\rm Reg}_p^\pm:={\rm Reg}_{N_\pm/[\omega_E,N_\pm]_{\rm dR}}=\frac{{\rm Reg}_{N_\pm}}{[\omega_E,N_\pm]^r_{\rm dR}},
\]
where $r={\rm rank}_\Z E(\Q)$.

\subsection{Signed arithmetic $p$-adic $L$-functions}

For $\mathbf{z}\in\mathbb{H}^1(T)$ any non-torsion element, we put
\begin{equation}\label{eq:F-pm}
\mathcal{F}_p^{\pm}:={\rm Col}^\pm(\mathbf{z}_p)\cdot\frac{g_{\rm str}}{h_{\mathbf{z}}}\in\Z_p[[X]],
\end{equation}
where 
$g_{\rm str}$ and $h_{\mathbf{z}}$ are as in Definition~\ref{def:arith-Lp}.
%
%
%
%
The following is the main result of this note.

\begin{thm}\label{thm:main}
Let $E/\Q$ be an elliptic curve with good supersingular reduction at an odd prime $p$ with $a_p=0$. Put 
\[
r:={\rm rank}_\Z E(\Q),
\]
and suppose $r\geq 1$. Then:
\begin{itemize}
\item[(i)] $\varrho:=\min\{{\rm ord}_X(\mathcal{F}_p^+),{\rm ord}_X(\mathcal{F}_p^-)\}\geq r$.
\item[(ii)] If $\sha(E/\Q)[p^\infty]$ is finite and ${\rm Reg}_p^{\rm str}\neq 0$, then equality holds in ${\rm (i)}$ and the leading coefficient $(\mathcal{F}_p^{+,\divideontimes},\mathcal{F}_p^{-,\divideontimes})$ of the vector $(\mathcal{F}_p^+,\mathcal{F}_p^-)\in\Z_p[[X]]^{\oplus{2}}$ is given up to a $p$-adic unit by  
\[
(\mathcal{F}_p^{+,\divideontimes},\mathcal{F}_p^{-,\divideontimes})\;\sim_p\;(\log_p\kappa(\gamma))^{-r}\cdot({\rm Reg}_p^+,{\rm Reg}_p^-)\cdot\frac{\#\sha(E/\mathbb{Q})[p^\infty]\cdot{\rm Tam}(E/\mathbb{Q})}{(\#E(\mathbb{Q})_{\rm tors})^2}.
\]
\end{itemize}
\end{thm}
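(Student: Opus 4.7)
The plan is to deduce Theorem~\ref{thm:main} directly from Perrin-Riou's formula (Theorem~\ref{thm:PR}) by expressing $\mathcal{F}_p^{\rm PR}$ in a basis of $D_p(E)$ adapted to Kobayashi's signed decomposition, and then reading off orders and leading coefficients coordinate by coordinate. The first step is to unpack the definition of $\mathscr{L}_{\mathbf{z}_p}\in D_p(E)\otimes\mathcal{H}(\Gamma)$ using Lemma~\ref{lem:dual-basis}: since $(\eta_\beta,\eta_\alpha)$ is dual to $(\nu_\alpha,\nu_\beta)$ under $[\cdot,\cdot]_{\rm dR}$, one has $\mathscr{L}_{\mathbf{z}_p}={\rm Col}_{\eta_\beta}(\mathbf{z}_p)\,\nu_\alpha+{\rm Col}_{\eta_\alpha}(\mathbf{z}_p)\,\nu_\beta$. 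Substituting Lei's decomposition (Theorem~\ref{thm:Lei-PhD}), collecting terms, and invoking $\nu_\alpha+\nu_\beta=p\nu_-$ and $\alpha\nu_\alpha+\beta\nu_\beta=p\nu_+$ from Definition~\ref{def:nu-pm}, the key identity
\[
\mathcal{F}_p^{\rm PR} \;=\; p\log_p^+\,\mathcal{F}_p^-\,\nu_- \;+\; p\log_p^-\,\mathcal{F}_p^+\,\nu_+
\]
drops out after multiplying by $g_{\rm str}/h_{\mathbf{z}}$. This is the bridge between Perrin-Riou's arithmetic $p$-adic $L$-function and its signed counterparts.

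From this identity part~(i) follows immediately: using $\Phi_{p^k}(1)=p$ one checks that $\log_p^\pm(0)=1/p$, so $p\log_p^\pm(0)\in\Z_p^\times$, and hence the $\nu_+$- and $\nu_-$-coordinates of $\mathcal{F}_p^{\rm PR}$ have orders of vanishing $m_+:={\rm ord}_X\mathcal{F}_p^+$ and $m_-:={\rm ord}_X\mathcal{F}_p^-$ respectively. Since $(\nu_+,\nu_-)$ is a basis, ${\rm ord}_X\mathcal{F}_p^{\rm PR}=\min(m_+,m_-)=\varrho$, and Theorem~\ref{thm:PR}(i) delivers $\varrho\geq r$. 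Under the hypotheses of~(ii), Theorem~\ref{thm:PR}(ii) upgrades this to $\varrho=r$ and expresses $\mathcal{F}_p^{{\rm PR},\divideontimes}=[X^r]\mathcal{F}_p^{\rm PR}$, up to a $p$-adic unit, in terms of $(1-\varphi)^2{\rm Reg}_p^{\rm PR}$ and the expected arithmetic factor. Extracting $[X^r]$ of the key identity, and noting that only the constant terms of $\log_p^\pm$ contribute because each $\mathcal{F}_p^\pm$ vanishes to order $\geq r$, one obtains
\[
[X^r]\mathcal{F}_p^{\rm PR} \;=\; \mathcal{F}_p^{-,\divideontimes}\,\nu_- + \mathcal{F}_p^{+,\divideontimes}\,\nu_+,
\]
with the convention $\mathcal{F}_p^{\pm,\divideontimes}:=[X^r]\mathcal{F}_p^\pm$ (zero if $m_\pm>r$). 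By Proposition~\ref{prop:coordinates} the $(\nu_+,\nu_-)$-coordinates of $(1-\varphi)^2{\rm Reg}_p^{\rm PR}$ are $(2\,{\rm Reg}_p^+,(p-1)\,{\rm Reg}_p^-)$, and since $p>2$ both $2$ and $p-1$ lie in $\Z_p^\times$; matching coordinates then gives the leading-coefficient formula in~(ii).

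I expect the main obstacle to be the first step: carefully tracking the dual-basis identification of $\mathscr{L}_{\mathbf{z}_p}$ as an element of $D_p(E)\otimes\mathcal{H}(\Gamma)$, and propagating the indexing through Lei's theorem and Definition~\ref{def:nu-pm} with all sign and labeling conventions consistent, so as to arrive at the key identity above. Once it is in hand, the rest of the argument is pure bookkeeping; the only conceptual subtlety worth flagging is the convention that $\mathcal{F}_p^{\pm,\divideontimes}$ denotes the coefficient of $X^r$ (rather than of $X^{m_\pm}$), which renders the case $m_+\neq m_-$ automatic and permits the two signed leading-coefficient formulas to be packaged into a single vector equality up to a $p$-adic unit.
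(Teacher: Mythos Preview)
Your proposal is correct and follows essentially the same approach as the paper: both arguments combine Lemma~\ref{lem:dual-basis} with Theorem~\ref{thm:Lei-PhD} to obtain the key identity (the paper writes it in matrix form as $\mathcal{F}_p^{\rm PR}=(\mathcal{F}_p^-,\mathcal{F}_p^+)M_{\rm log}\bigl(\begin{smallmatrix}\nu_\alpha\\\nu_\beta\end{smallmatrix}\bigr)$, which unwinds to your $p\log_p^+\mathcal{F}_p^-\nu_-+p\log_p^-\mathcal{F}_p^+\nu_+$), then use $\log_p^\pm(0)\neq 0$ to read off $\varrho={\rm ord}_X(\mathcal{F}_p^{\rm PR})$, and finally match the $X^r$-coefficient against Theorem~\ref{thm:PR}(ii) and Proposition~\ref{prop:coordinates}. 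Your explicit computation $p\log_p^\pm(0)=1$ and your remark on the convention $\mathcal{F}_p^{\pm,\divideontimes}=[X^r]\mathcal{F}_p^\pm$ are both in line with the paper's treatment.
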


\begin{proof}
We begin by noting that by Theorem~\ref{thm:Lei-PhD} and Lemma~\ref{lem:dual-basis}, we can rewrite the arithmetic $p$-adic $L$-function $\mathcal{F}_p^{\rm PR}\in D_p(E)[[X]]$  of Definition~\ref{def:arith-Lp} in matrix form as
\begin{equation}\label{eq:cor-Lei}
\mathcal{F}_p^{\rm PR}=(\mathcal{F}_p^-,\mathcal{F}_p^+)M_{\rm log}\begin{pmatrix}\nu_\alpha\\\nu_\beta\end{pmatrix}.
\end{equation}
%
In particular, since ${\rm log}_p^\pm\vert_{X=0}\neq 0$, 
from \eqref{eq:cor-Lei} and the product rule we readily find
\begin{equation}\label{eq:der-t}
\frac{d^t}{dX^t}\mathcal{F}_p^{\rm PR}\Bigr\vert_{X=0}=\Bigl(\frac{d^t}{dX^t}\mathcal{F}_p^{-}\Bigr\vert_{X=0},\frac{d^t}{dX^t}\mathcal{F}_p^{+}\Bigr\vert_{X=0}\Bigr)Z_{\rm log}\begin{pmatrix}\nu_\alpha\\\nu_\beta\end{pmatrix}
\end{equation}
for all $t\geq 0$.
Since the matrix $Z_{\rm log}$ is invertible, this shows that 
\begin{equation}\label{eq:PR-varrho}
{\rm ord}_X(\mathcal{F}_p^{\rm PR})=\varrho,
\end{equation} 
and therefore the proof of part (i) follows from Theorem~\ref{thm:PR}(i). 
 For the proof of part (ii), suppose $\sha(E/\Q)[p^\infty]$ is finite and ${\rm Reg}_p^{\rm str}\neq 0$; then $\varrho=r$ by \eqref{eq:PR-varrho} and Theorem~\ref{thm:PR}(ii). 
%
Now put 
\[
\mathcal{F}_p^{{\rm PR},(r)}:=X^{-r}\mathcal{F}_p^{{\rm PR}}\in D_p(E)[[X]],\quad\quad
\mathcal{F}_p^{\pm,(r)}:=X^{-r}\mathcal{F}_p^{\pm}\in\Z_p[[X]],
\]
and note that \eqref{eq:der-t} yields the middle equality in the chain
\begin{equation}\label{eq:prod-rule}
\mathcal{F}_p^{{\rm PR},\divideontimes}=\mathcal{F}_p^{{\rm PR},(r)}(0)=
\bigl(\mathcal{F}_p^{-,(r)}(0),\mathcal{F}_p^{+,(r)}(0)\bigr)\begin{pmatrix}\nu_-\\\nu_+\end{pmatrix}
=\bigl(\mathcal{F}_p^{-,\divideontimes},\mathcal{F}_p^{+,\divideontimes}\bigr)\begin{pmatrix}\nu_-\\\nu_+\end{pmatrix}.
\end{equation}
On the other hand, by Theorem~\ref{thm:PR}(ii) and Proposition~\ref{prop:coordinates} we have that the coordinates $(d_+,d_-)$ of $\mathcal{F}_p^{{\rm PR},\divideontimes}$ with respect to $(\nu_+,\nu_-)$ are given up to a $p$-adic unit by
\begin{equation}\label{eq:g}
(d_+,d_-)\;\sim_p\;(\log_p\kappa(\gamma))^{-r}\cdot({\rm Reg}_p^+,{\rm Reg}_p^-)\cdot\frac{\#\sha(E/\mathbb{Q})[p^\infty]\cdot{\rm Tam}(E/\mathbb{Q})}{(\#E(\mathbb{Q})_{\rm tors})^2},\nonumber
\end{equation}
which together with \eqref{eq:prod-rule} concludes the proof of part (ii).
%
\end{proof}

\subsection{Proof of Theorem~\ref{thm:main-intro}}


\begin{proof}[Proof of Theorem~\ref{thm:main-intro}]
In view of Theorem~\ref{thm:main}, it suffices to show that the power series $\mathcal{F}_p^\pm\in\Z_p[[X]]$ introduced in \eqref{eq:F-pm} 
generates the characteristic ideal of $\mathcal{X}^\pm(E/\Q_\infty)$. 

As explained in \cite[\S{6.4}]{lei-PhD}, Poitou--Tate duality gives rise to the four-term exact sequence
\[
0\rightarrow\mathbb{H}^1(T)\rightarrow{\rm Im}({\rm Col}^\pm)\rightarrow\mathcal{X}^\pm(E/\Q_\infty)\rightarrow{\rm Sel}_{p^\infty}^{\rm str}(E/\Q_\infty)^\vee\rightarrow 0.
\]
This induces
\begin{equation}\label{eq:PT-z}
0\rightarrow\frac{\mathbb{H}^1(T)}{(\mathbf{z})}\rightarrow\frac{{\rm Im}({\rm Col}^\pm)}{({\rm Col}^\pm(\mathbf{z}_p))}\rightarrow\mathcal{X}^\pm(E/\Q_\infty)\rightarrow{\rm Sel}_{p^\infty}^{\rm str}(E/\Q_\infty)^\vee\rightarrow 0.
\end{equation}
Since the $\Lambda$-linear maps ${\rm Col}^\pm$ have pseudo-null cokernel by \cite[Thm.~6.2]{kobayashi}, we see that the second term in \eqref{eq:PT-z} has characteristic ideal generated by ${\rm Col}^\pm(\mathbf{z}_p)$, and so the fact that $\mathcal{F}_p^\pm$ has the desired property follows by multiplicativity.
\end{proof}

\bibliography{pBSD-references}
\bibliographystyle{alpha}

\end{document}